\pgfplotsset{samples=200}
\newtheorem{definition}{Definition}[section]
\newtheorem{theorem}{Theorem}[section]
\newtheorem*{pretheorem}{Theorem}
\newtheorem{lemma}{Lemma}[section]
\newtheorem{conditions}{Conditions}[section]
\newcounter{asyfigcntr}
\newcommand{\figscrew}{
\begin{figure}
    \label{fig:screw}
    \center \includegraphics[width=0.4\linewidth]{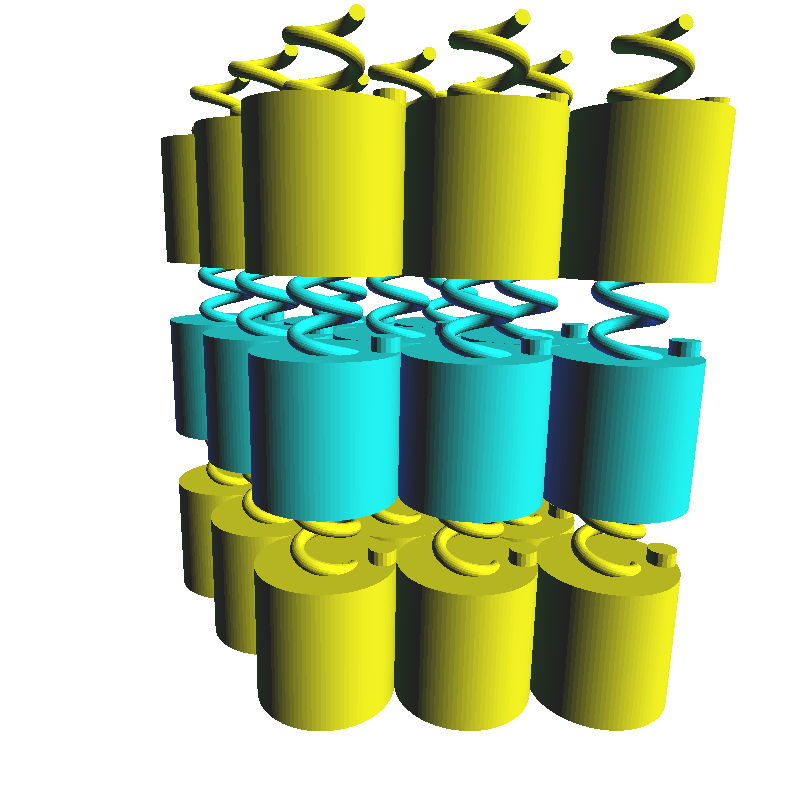}
    \caption{Sample of the packing arrangement of cylinders with screws and screw holes described in the text.
	The arrangement can be condensed, but in a way that is not continuous in the topology given by $\delta_R$.}
\end{figure}
}
\let\origthanks\thanks
\renewcommand\thanks[1]{\begingroup\let\rlap\relax\origthanks{#1}\endgroup}
\title{The local optimality of the double lattice packing}
\author{Yoav Kallus\thanks{Santa Fe Institute, 1399 Hyde Park Road, Santa Fe, New Mexico, USA} and W\"oden Kusner\thanks{Graz University of Technology, Institute for Analysis and Computational Number Theory, Steyrergasse 30/II, 8010 Graz, Austria}}
\begin{document}
\bibliographystyle{alpha}
\maketitle
\abstract{
This paper introduces a technique for proving the local optimality of packing configurations.  Applying this technique to a general convex polygon, we prove that the construction of the optimal double lattice packing by Kuperberg and Kuperberg is also locally optimal in the full space of packings.
}

\section{Introduction}

This paper began as an investigation of the optimality of the double lattice packing for pentagons and heptagons.  In \cite{kuperberg1990double}, Kuperberg and
Kuperberg describe a recipe for finding the double lattice packing of congruent planar convex bodies with maximal density by solving an optimization
problem over inscribed parallelograms. This optimization problem is usually tractable, and in the case of convex polygons can be solved by an algorithm with
running time linear in the number of vertices \cite{mount1991densest}. As examples, Kuperberg and Kuperberg construct the densest double lattice packing for
both regular pentagons and regular heptagons and show that these packings have densities of $(5-\sqrt{5})/3 = 0.92131\dots$ and $0.8926\dots$ respectively.
These are the current records and possibly the best \textit{general} packings of the plane by regular pentagons and by regular heptagons.

Starting around the turn of the century, a significant push, both theoretical and computational, arose to answer some of the most basic yet frustrating questions in
the theory of packing problems (for background on packing problems, see \cite{brass2005research}, \cite{conway1999recent}, \cite{groemer1963existenzsatze}).
Along with the proof and formal verification of the Kepler conjecture \cite{hales2015formal}, a number of other results 
on sphere packing in higher dimensions have proved illuminating \cite{cohn2003new,vance2011improved,venkatesh2013note}.
For packings by congruent anisotropic bodies, sharp results are limited mostly to the plane, where the best packings of all centrally-symmetric bodies are
achieved by lattices \cite{fejes1950some}, and a series of sparse results in higher dimensions \cite{bezdek2013dense}.

Among general convex bodies, the problem of finding the best packing of regular pentagons serves as a toy model for harder problems,
like finding the best packing of regular tetrahedra. However, the pentagon problem is still not a tractable one. Explicit upper bounds
for the packing density of regular tetrahedra and octahedra are better than the trivial unity upper bound by minuscule margins \cite{gravel2011upper}.
A semidefinite programming (SDP) approach has been suggested by Oliveira and Vallentin to calculate improved upper bounds \cite{mario2013computing}.
Though the SDP method has not yet yielded a nontrivial upper bound for packing of tetrahedra, it has been used to obtain an upper bound of $0.98103$ on
the density of regular pentagon packings. There remains a sizable gap between the highest density achieved for pentagon packings and this upper bound.

A long-open problem, still wide open even in the plane, asks for the pessimal convex body for packing, that is,
the shape that has the lowest maximum packing density \cite{brass2005research,bezdek2013dense,kallus2014ball}.
In the class of centrally-symmetric bodies in the plane, it is Reinhardt \cite{reinhardt1933dichteste} who conjectured that a smoothed octagon is the minimizer.
In the class of general convex bodies in the plane, it is conjectured to be the regular heptagon \cite{kallus2015pessimal}.
However, even though the maximum-density double lattice is conjectured to achieve the 
maximum packing density for the regular heptagon, no sharp upper bound has been proved.

The regular pentagon and heptagon are cases of special interest, and we initially sought out to investigate whether their optimal double-lattice packing can be shown
to be also optimal among a broader class of packings. We were able to show that these packings are optimal at least in some neighborhood in the space of all packings.
Furthermore, we discovered that our method can be generalized to all convex polygons. We demonstrate that, while double lattices are in general not globally
optimal, they are always at least locally optimal.

\begin{pretheorem}
If a double lattice packing is an isolated local maximum for density among double lattices and is not one of a two exceptional cases, then it is a local maximum for density among all packings.
\end{pretheorem}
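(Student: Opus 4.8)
The plan is to argue directly: writing $P_0$ for the given double lattice packing, I will show that every packing sufficiently $\delta_R$-close to $P_0$ has density at most that of $P_0$. The first step is purely combinatorial bookkeeping about $P_0$. Each copy of the body $K$ in $P_0$ touches the same finite collection of neighbours --- a few translates of $K$ and a few translates of the point reflection $\sigma K$ --- and these contacts realise the extremal directions in the Kuperberg--Kuperberg inscribed-parallelogram description of $P_0$. Fixing $R$ larger than the diameter of this contact neighbourhood, any packing $P'$ with $\delta_R(P_0,P')$ small admits a canonical label-preserving correspondence with $P_0$: every body of $P'$ is the image of a unique body of $P_0$ under a small rigid motion, and two bodies of $P'$ can be in, or near, contact only when the corresponding bodies of $P_0$ contact each other. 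Hence the condition ``$P'$ is a packing'' reduces, near $P_0$, to finitely many non-overlap inequalities per body, one for each edge of the contact graph of $P_0$.

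The second step is to linearise and reduce to finite dimension. A one-parameter deformation of $P_0$ is, to first order, a displacement field assigning to each body an infinitesimal rigid motion; each contact contributes a linear inequality (the relative velocity along the contact normal is nonnegative), and the first variation of the area occupied per body --- the quantity that density maximisation seeks to minimise --- is a linear functional of the field, in fact one depending only on the induced deformation of the period lattice. Since both the constraints and this functional are linear in the field, an averaging argument over large boxes lets me assume any hypothetical improving field is periodic, hence described by a deformation of the period lattice of $P_0$ together with the motions of the finitely many bodies in one fundamental cell; a parallel argument reduces a genuine family of nearby denser packings to ones periodic with a period lattice near that of $P_0$. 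What is left is finite-dimensional: the period lattice and the two copies of $K$ in a fundamental cell may each be moved a little (the cell copies by independent rigid motions), subject to the inherited contact inequalities, and the task is to show that the cell area is strictly minimised, among such periodic competitors, at the configuration of $P_0$.

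The crux is to compare this finite-dimensional periodic problem with the much smaller family of double lattices, for which the hypothesis is tailor-made. The double lattice deformations are precisely the periodic deformations along which the two cell bodies neither rotate relative to one another nor otherwise leave the point-reflection relation, and the hypothesis that $P_0$ is an \emph{isolated} maximum among double lattices says the cell area is strictly minimised along this sub-family at $P_0$. It therefore suffices to show that the transverse directions --- those that break the relative orientation of the two cell bodies --- cannot decrease cell area either; for this one argues that the contact normals of $P_0$ span the relevant space of first-order motions modulo rigid motions and modulo double lattice deformations. This spanning property is the non-degeneracy condition; it is finite to check and holds, in particular, for the Kuperberg--Kuperberg optima of the regular pentagon and heptagon. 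Together, strictness along the double lattice directions and nonnegativity transverse to them yield a strict local minimum of cell area at $P_0$, hence the density bound.

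The main obstacle is exactly this last reduction, and it is here that the two exceptional configurations enter. The non-degeneracy fails when the contact graph of $P_0$ is too sparse: there are then extra zero modes --- infinitesimal motions leaving the density stationary along which the packing can in fact be condensed, but only through a path discontinuous in the $\delta_R$ topology, precisely as in the screw-and-screw-hole construction. I would identify the two convex bodies whose optimal double lattice carries such a degeneracy, show they are the only exceptions, and either treat them separately or simply exclude them, as the statement does. The residual work is technical but routine: making the body-to-body correspondence and the area comparisons uniform in space, so that a purely local deficiency estimate integrates to the global density inequality, and verifying that the averaging and periodisation limits are legitimate in the $\delta_R$ topology.
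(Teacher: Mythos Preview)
Your proposal has a real gap at the reduction step. You assert that an averaging argument lets you assume any improving displacement field is periodic, and that ``a parallel argument reduces a genuine family of nearby denser packings to ones periodic with a period lattice near that of $P_0$'', with just two bodies per fundamental cell. Neither reduction is justified. A packing $\delta_R$-close to $P_0$ need not be periodic, and you give no procedure that converts it into a periodic one with a bounded number of bodies per cell while preserving both admissibility and the density comparison. The paper sidesteps this issue altogether: rather than periodise, it triangulates the plane into a honeycomb of congruent quadrilateral cells, each determined by only four bodies of the packing, and proves that the area of every individual cell is locally minimised at the reference configuration. The global density inequality then follows by summing cell areas, with no appeal to periodicity at all.

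Two further points. First, your description of the exceptional cases as configurations where ``the contact graph of $P_0$ is too sparse'' is not what actually occurs. The exceptions are specific geometric coincidences in the half-length parallelogram --- a parallelogram vertex landing on a polygon vertex in one of two particular patterns --- that produce vertex-to-vertex contacts instead of vertex-to-edge ones; these break the linearised non-overlap description used in the analysis, not the rank of the constraint system. Second, even when all contacts are of the correct type, the resulting linear program can fail to admit a strictly positive dual certificate: this already happens for the regular heptagon and, generically, in the general-polygon computation. The paper repairs this by adding isometry-invariant auxiliary terms to the cell objective that cancel in pairs across adjacent cells (``negligible in the aggregate''), so that the modified cell problem does have a positive dual while the global sum is unaffected. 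Your spanning condition on contact normals neither detects nor handles this obstruction.
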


The precise meanings of the terms used will be elucidated in the rest of the paper.

\section{Theoretical Preparations}
\subsection{Local Optimality}

We will look at packings of congruent copies of a body $K$. That is, every
element of the packing is given by $\xi(K)$, where $\xi\in E(n)$ is an isometry of Euclidean
space. It will be convenient to assume that the reference body $K$ is situated so that
its interior contains the origin. The isometry group $E(n)$ of $\mathbb{R}^n$ can be considered
as a subgroup of $SL_{n+1}(\mathbb{R})$ that preserves the plane $(x_1,\ldots,x_n,1)\in\mathbb{R}^{n+1}$.
This identification gives us the Frobenius norm $N(\xi) = \|\xi - \mathrm{Id}\|$ for $\xi\in E(n)$. Some useful
inequalities for this norm that we will use are
\begin{equation}\label{eq:norm-ineq}
    \begin{aligned}
	N(\xi^{-1}) &\le \|\xi^{-1}\|N(\xi)\\
	N(\xi\xi') &\le N(\xi)+N(\xi')+N(\xi)N(\xi')\\
	N(\psi \xi \psi^{-1}) &\le \|\psi\|\|\psi^{-1}\|N(\xi)\\
	\|\xi(0)\| \le N(\xi) &\le \|\xi(0)\|+2n^{1/2}\text.
    \end{aligned}
\end{equation}

\begin{definition}
    Let $\Xi$ be a set of isometries. The limit
    \begin{equation}
	d(\Xi)=\lim_{t\to\infty} \frac{\mathrm{vol}\,B(0,t)}{|\{\xi\in\Xi : \xi(0)\in B(0,t)\}|}\text,
    \end{equation}
    if it exists, is its \textit{mean volume}.  
    The limits superior and inferior are its upper and lower mean volumes, denoted $\overline{d}(\Xi)$ and $\underline{d}(\Xi)$.
    We say $\Xi$ is a $(r,R)$-set if the point set $\{\xi(0):\xi\in\Xi\}$ has a packing
    radius at least $r$ and a covering radius at most $R$.
\end{definition}

\begin{definition}
    Let $K$ be a compact set with interior. We say that $\Xi$ is \textit{admissible} for $K$ if
    the interiors of $\xi(K)$ and $\xi'(K)$ are disjoint for any two distinct isometries $\xi,\xi'\in\Xi$.
    We say furthermore that $\Xi$ is \textit{saturated} if there is no $\xi \in E(n)\setminus\Xi$ such that $\Xi\cup\{\xi\}$
    is again admissible.
\end{definition}

There are $r(K)$ and $R(K)$ such that when $\Xi$ is admissible and saturated, then $\Xi$ is
a $(r(K),R(K))$-set. As a consequence, such sets are countable.

\begin{definition}
    Given two $(r',R')$-sets $\Xi$ and $\Xi'$ of isometries, we define the premetric 
    \begin{equation}
	\begin{aligned}
	    \delta_R(\Xi,\Xi') = \inf_\text{bij.} \sup \{&N(\xi^{-1}\psi(\psi')^{-1}\xi'):\\ & \xi,\psi\in\Xi \text{ such that } \|\xi(0)-\psi(0)\|\le 2R \text{ or } \|\xi'(0)-\psi'(0)\|\le 2R\}\text.
	\end{aligned}
    \end{equation}
    The infimum is over all bijections $(\cdot)'\colon\Xi\to\Xi'$.
\end{definition}

When $R\ge R'$, $\delta_R(\Xi,\Xi')=0$ if and only if $\xi = \psi \xi'$ for some $\psi\in E(n)$ and some
bijection. Consider a body $K$. When $R\ge R(K)$, $\delta_R(\Xi,\Xi')$ induces a metric on the space of admissible
$(r,R)$-sets up to overall isometry, which includes the saturated sets as a subset.

\begin{definition}
    We say an admissible and saturated set $\Xi$ is \textit{strongly extreme} for $K$ if 
    there is $R>0$ and $\epsilon>0$, such that whenever $\delta_R(\Xi,\Xi')<\epsilon$, then
    either $\Xi'$ is inadmissible or $\underline{d}(\Xi')\ge \overline{d}(\Xi)$. 
\end{definition}

We stop to discuss why we define the topology in the space of packing arrangements in the way that we do.
A naive choice of topology is the one given by the metric
\begin{equation}
    \delta_H(\Xi,\Xi') = \inf_{\text{bij.},\psi\in E(n)}\sup_{\xi\in\Xi} N(\xi^{-1}\psi\xi')\text,
\end{equation}
which we call the Hausdorff metric. However, under the Hausdorff topology
the mean volume is locally constant, and any packing is trivially locally optimal. For the topology to allow
the mean volume to vary locally, it must allow elements that are increasingly far
apart to move by the action of increasingly different isometries. One metric satisfying this criterion,
similar to the one used in Ref.\ \cite{Kuperberg2000notions}, is
\begin{equation}
    \inf_{\text{bij.},\psi\in E(n)} \sup_{\xi\in\Xi} \min(N(\xi^{-1}\psi\xi'),1/\min(\|\xi(0)\|,\|\xi'(0)\|))\text.
\end{equation}
While suited for the discussions of recurrence
there, here this metric will yield the result that any packing that is not as dense as
the densest packing of $K$ is not even locally optimal, since the density can be improved by a finite amount
at arbitrarily small distance by completely changing the packing outside a ball of arbitrarily large radius.
Under our definition, for a family of packings to have a reference packing as a limit, it must be
the case that in every bounded region of space, there is a packing in the family that agrees with the
reference packing to an arbitrarily small Hausdorff distance.

\figscrew

Importantly, there are deformations that intuitively feel continuous but are not continuous under our topology. 
One example is constructed by decorating a cylinder
with a screw on its top base and a corresponding screw hole boring into its bottom base. Consider the packing
where each cylinder is screwed into a cylinder above it, in such a way that the two cylinders
are related to each other by a translation, and the screw is not completely screwed in. This
creates a column of cylinders, copies of which we arrange in a triangular grid.
Since every screw is not completely screwed in, the density of the packing can be increased by screwing each screw in
further. Since the interlayer spacing is related to the relative rotation between
cylinders on the two layers, even an arbitrarily small consistent decrease in interlayer spacing
will cause some layers to be rotated by at least some finite angle. Because the orientation of triangular
grid remains unchanged, this means that there is always a cylinder whose arrangement of neighbors, from the frame of reference
in which it remains fixed, has changes by a finite extent.
Therefore, this motion is not continuous in the topology we defined. As we prove some packing arrangements strongly extreme,
it is worth keeping in mind what kinds of local improvement such a result rules out and what kinds
are not ruled out.

Nevertheless, compared to some previously introduced notions of local optimality, our notion of strong extremality is both stronger
and more widely applicable. The notions of an \textit{extreme}
lattice packing \cite{martinet2003perfect} and a \textit{periodic-extreme} periodic packing \cite{schurmann2013strict} apply only to special
classes of packings. We show that strong extremality, which applies more generally,
implies extremality and periodic-extremality in these special classes.

\begin{definition}
    A set of isometries $\Lambda$ is called a (full rank) lattice if it is an $(r,R)$-set for some $r>0$ and $R<\infty$, it consists
    only of translations, and it is closed under composition and inversion.
\end{definition}
\begin{definition}
    A lattice $\Lambda$ is \textit{extreme} for a compact set $K$ if it is admissible
    for $K$ and there exists $\epsilon>0$ such that for all $T\in GL_n(\mathbb{R})$, either $T[\Lambda]=\{T[\lambda]=T\lambda T^{-1}:\lambda\in\Lambda\}$
    is inadmissible for $K$, $\det T\ge1$, or $\|T-\mathrm{Id}\|>\epsilon$.
\end{definition}
\begin{theorem}
    If a lattice $\Lambda$ is strongly extreme for $K$, then $\Lambda$ is extreme for $K$.
\end{theorem}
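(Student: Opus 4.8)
\textit{Proof proposal.} The plan is to reduce extremality to strong extremality by comparing the two notions of perturbation head on, exploiting that for a lattice the relevant group elements are simply translations. Assume $\Lambda$ is strongly extreme, with witnessing constants $R>0$ and $\epsilon>0$. Given $T\in GL_n(\mathbb{R})$ with $\|T-\mathrm{Id}\|$ small, I would take as candidate bijection $\Lambda\to T[\Lambda]$ the map $\lambda\mapsto T[\lambda]=T\lambda T^{-1}$ and estimate $\delta_R(\Lambda,T[\Lambda])$ from above through it. Writing each $\lambda\in\Lambda$ as the translation by $\lambda(0)$, one checks that for $\xi,\psi\in\Lambda$ the element $\xi^{-1}\psi\,(T[\psi])^{-1}(T[\xi])$ appearing in the definition of $\delta_R$ is exactly the translation by $(\mathrm{Id}-T)\bigl(\psi(0)-\xi(0)\bigr)$, whose norm $N$ is $\bigl\|(\mathrm{Id}-T)(\psi(0)-\xi(0))\bigr\|\le\|T-\mathrm{Id}\|\,\|\psi(0)-\xi(0)\|$. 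Since the bijection sends $\xi(0)-\psi(0)$ to $T(\xi(0)-\psi(0))$, whenever either of these vectors has length at most $2R$ the vector $\psi(0)-\xi(0)$ has length at most $2R\|T^{-1}\|$, which \eqref{eq:norm-ineq} bounds in terms of $\|T-\mathrm{Id}\|$. Hence $\delta_R(\Lambda,T[\Lambda])\le C\|T-\mathrm{Id}\|$ for some $C=C(R,n)$, provided $\|T-\mathrm{Id}\|<1$.

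Next I would choose $\epsilon'>0$ so small that $\|T-\mathrm{Id}\|\le\epsilon'$ implies both $C\|T-\mathrm{Id}\|<\epsilon$ and $\|T-\mathrm{Id}\|<1$; the latter guarantees that $T[\Lambda]$ is again a lattice (hence an $(r',R')$-set on which $\delta_R$ makes sense) and, by inspecting the eigenvalues of $T$, that $\det T>0$. Suppose for contradiction that $T$ satisfies $\|T-\mathrm{Id}\|\le\epsilon'$ and $\det T<1$ yet $T[\Lambda]$ is admissible. Then $\delta_R(\Lambda,T[\Lambda])<\epsilon$, so strong extremality forces $\underline{d}(T[\Lambda])\ge\overline{d}(\Lambda)$. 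But $\Lambda$ and $T[\Lambda]$ are lattices, so their mean volumes exist and equal their covolumes, and the point set of $T[\Lambda]$ is the image under $T$ of that of $\Lambda$, giving $d(T[\Lambda])=|\det T|\,d(\Lambda)=\det T\cdot d(\Lambda)<d(\Lambda)$. This contradicts $\underline{d}(T[\Lambda])=d(T[\Lambda])\ge\overline{d}(\Lambda)=d(\Lambda)$. Therefore $T[\Lambda]$ must be inadmissible, which is precisely the definition of extremality with constant $\epsilon'$.

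Since the computation above is elementary, I expect the only points requiring care to be the routine verifications that $T[\Lambda]$ is an $(r',R')$-set with parameters controlled uniformly as $T\to\mathrm{Id}$ (so that $\delta_R$ applies and estimating it via a single bijection is legitimate), that the mean volume scales as $d(T[\Lambda])=|\det T|\,d(\Lambda)$ — both immediate from $T$ being bi-Lipschitz with constants controlled by $\|T-\mathrm{Id}\|$ — and that $\|T-\mathrm{Id}\|<1$ indeed forces $\det T>0$. None of these is a real obstacle; the essential content is the identity expressing $\xi^{-1}\psi(T[\psi])^{-1}(T[\xi])$ as the translation by $(\mathrm{Id}-T)(\psi(0)-\xi(0))$ together with the scaling of the mean volume under $T$.
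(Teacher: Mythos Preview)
Your proposal is correct and follows essentially the same approach as the paper's proof: both rest on the identity that $\lambda_1^{-1}\lambda_2\,(T[\lambda_2])^{-1}T[\lambda_1]$ is the translation by $(\mathrm{Id}-T)(\lambda_2(0)-\lambda_1(0))$, together with the scaling $d(T[\Lambda])=|\det T|\,d(\Lambda)$. The paper frames the argument as a contrapositive while you argue directly by contradiction, and you are in fact slightly more careful than the paper in handling the ``or'' clause in the definition of $\delta_R$ (accounting for the case $\|\xi'(0)-\psi'(0)\|\le 2R$ via $\|T^{-1}\|$) and in checking $\det T>0$; these are cosmetic differences, not a different method.
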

\begin{proof}
    If $\Lambda$ is not extreme for $K$, then there for all $\epsilon>0$, there exists $T\in GL_n(\mathbb{R})$ such that
    $\|T-\mathrm{Id}\|\le\epsilon$, $\det T<1$, and $T[\Lambda]$ is admissible for $K$.
    We have $N(\lambda_1^{-1}\lambda_2\allowbreak (T[\lambda_2])^{-1}T[\lambda_1]) = N((T-\mathrm{Id})[\lambda_1\lambda_2^{-1}])
    \le\epsilon\|\lambda_1(0)-\lambda_2(0)\|$. 
    Thus, $\delta_R(\Lambda,T\Lambda)<2R\epsilon$, and for arbitrarily small $\epsilon$, $T\Lambda$
    is an admissible packing of lower mean volume in an arbitrary neighborhood of $\Lambda$. Therefore,
    $\Lambda$ is not strongly extreme.
\end{proof}

\begin{definition}
    A set $\Xi\subset E(n)$ is periodic if it is of the form $\Lambda\Psi=\{\lambda\psi :\lambda\in\Lambda,\psi\in\Psi\}$,
    where $\Lambda$ is a lattice and $\Psi$ is finite.
\end{definition}
\begin{definition}\label{def:perext}
    A periodic set $\Xi=\Lambda\Psi$ is periodic-extreme for $K$ if it is admissible for $K$ and
    whenever $\tilde\Lambda\subseteq\Lambda$ is a sublattice of $\Lambda$ and
    $\tilde\Psi$ is a set of $|\Lambda/\tilde\Lambda|$ translations such that $\Lambda = \tilde\Lambda\tilde\Psi$, there exists
    $\epsilon>0$ such that for all $T\in GL_n(\mathbf{R})$ and $\phi\colon\tilde\Psi\times\Psi\to E(n)$ we have either
    $\{T[\tilde\lambda]\tilde\psi\psi\phi(\tilde\psi,\psi):\tilde\lambda\in\tilde\Lambda,\tilde\psi\in\tilde\Psi,\psi\in\Psi\}$ is inadmissible for $K$,
    $\det T\ge 1$, $\|T-\mathrm{Id}\|>\epsilon$, or $\|\phi(\tilde\psi,\psi)-\mathrm{Id}\|>\epsilon$ for
    some $\tilde\psi\in\tilde\Psi,\psi\in\Psi$.
\end{definition}
\begin{theorem}
    If a periodic set $\Xi$ is strongly extreme for $K$, then it is periodic-extreme for $K$.
\end{theorem}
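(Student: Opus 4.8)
The plan is to run the same argument as for Theorem~2.1: from a failure of periodic-extremality I extract admissible perturbations of $\Xi$ of strictly smaller mean volume that nonetheless converge to $\Xi$ in $\delta_R$, contradicting strong extremality. So suppose $\Xi=\Lambda\Psi$ is not periodic-extreme. Then there is a sublattice $\tilde\Lambda\subseteq\Lambda$ and a set $\tilde\Psi$ of $|\Lambda/\tilde\Lambda|$ translations with $\Lambda=\tilde\Lambda\tilde\Psi$ such that for every $\epsilon>0$ there are $T\in GL_n(\mathbb{R})$ and $\phi\colon\tilde\Psi\times\Psi\to E(n)$ with $\det T<1$, $\|T-\mathrm{Id}\|\le\epsilon$, $\|\phi(\tilde\psi,\psi)-\mathrm{Id}\|\le\epsilon$ for all $\tilde\psi,\psi$, and $\Xi':=\{T[\tilde\lambda]\tilde\psi\psi\phi(\tilde\psi,\psi):\tilde\lambda\in\tilde\Lambda,\tilde\psi\in\tilde\Psi,\psi\in\Psi\}$ admissible for $K$. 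I assume $\epsilon$ small throughout; then $\|T-\mathrm{Id}\|\le\epsilon$ forces $0<\det T<1$, the set $\Xi'$ (a small perturbation of the $(r,R)$-set $\Xi$) is again periodic and an $(r',R')$-set with $r',R'$ close to those of $\Xi$, and, since $\Xi=\tilde\Lambda\tilde\Psi\Psi$ with unique representations, the map $\tilde\lambda\tilde\psi\psi\mapsto T[\tilde\lambda]\tilde\psi\psi\phi(\tilde\psi,\psi)$ is a bijection $(\cdot)'\colon\Xi\to\Xi'$.

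First I compute the mean volume. The point set of $\Xi'$ is a union of $|\tilde\Psi|\,|\Psi|$ translates of the lattice $T[\tilde\Lambda]$, of covolume $\det T\cdot\mathrm{covol}(\tilde\Lambda)=\det T\cdot|\tilde\Psi|\,\mathrm{covol}(\Lambda)$; hence $\Xi'$ is periodic with $\underline{d}(\Xi')=\overline{d}(\Xi')=\det T\cdot\mathrm{covol}(\Lambda)/|\Psi|=\det T\cdot d(\Xi)<d(\Xi)=\overline{d}(\Xi)$.

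Next I bound $\delta_R(\Xi,\Xi')$ with the bijection above. Write two elements of $\Xi$ as $\xi=\tilde\lambda_1 a_1$, $\eta=\tilde\lambda_2 a_2$ with $a_i=\tilde\psi_i\psi_i$ and $\phi_i=\phi(\tilde\psi_i,\psi_i)$, and suppose $\|\xi(0)-\eta(0)\|\le 2R$ (the alternative $\|\xi'(0)-\eta'(0)\|\le 2R$ is handled identically, picking up a harmless factor $\|T^{-1}\|$). Because $\tilde\Lambda$ is abelian and $\lambda\mapsto T[\lambda]$ is a homomorphism, the lattice elements collapse into the single translation $\mu:=\tilde\lambda_2^{-1}\tilde\lambda_1\in\tilde\Lambda$:
\[
\xi^{-1}\eta(\eta')^{-1}\xi'=a_1^{-1}\big(\mu^{-1}\beta\mu\big)\big(\mu^{-1}T[\mu]\big)a_1\phi_1,\qquad \beta:=a_2\phi_2^{-1}a_2^{-1}.
\]
Writing $\mu$ as translation by $w$, the identities $\tilde\lambda_1(0)=\xi(0)-a_1(0)$ and $\tilde\lambda_2(0)=\eta(0)-a_2(0)$ give $\|w\|\le 2R+2C$ with $C=\max_{\tilde\psi,\psi}\|\tilde\psi\psi(0)\|$. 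Now $\mu^{-1}T[\mu]$ is translation by $(T-\mathrm{Id})w$, so $N(\mu^{-1}T[\mu])\le\epsilon\|w\|$; and $\mu^{-1}\beta\mu$ has the same orthogonal linear part as $\beta$, with translation part of norm at most $(1+\|w\|)$ times the linear deviation of $\beta$, so $N(\mu^{-1}\beta\mu)\le c\,\epsilon(\|w\|+1)$ with $c$ depending only on $\max_a\|a\|\,\|a^{-1}\|$ (using $N(\beta)\le\|a_2\|\,\|a_2^{-1}\|\,N(\phi_2^{-1})$ and $N(\phi_2^{-1})\le\|\phi_2^{-1}\|\,\epsilon$). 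Folding these together with~\eqref{eq:norm-ineq} --- conjugating by the finitely many $a_1$ and multiplying by $\phi_1$ --- yields $N(\xi^{-1}\eta(\eta')^{-1}\xi')\le C_R\,\epsilon$ with $C_R$ independent of the chosen pair and of $\epsilon$. Hence $\delta_R(\Xi,\Xi')\le C_R\,\epsilon$.

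Finally, take $R=R_0$ and $\epsilon_0$ from the definition of strong extremality of $\Xi$, and choose $\epsilon$ with $C_{R_0}\epsilon<\epsilon_0$. Then $\delta_{R_0}(\Xi,\Xi')<\epsilon_0$ while $\Xi'$ is admissible and $\underline{d}(\Xi')<\overline{d}(\Xi)$, contradicting strong extremality; so $\Xi$ is periodic-extreme. The one genuinely delicate step is the $\delta_R$ estimate: the individual $\tilde\lambda_i$ are unbounded, so one must isolate the cancellation that makes $\xi^{-1}\eta(\eta')^{-1}\xi'$ depend only on the bounded difference $w$, and then check that conjugating the near-identity factor $\beta$ by the (possibly large) translation $\mu$ does not blow up its Frobenius norm --- only its translation part grows, and only linearly in $\|w\|$, which is controlled by $R$. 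Everything else is bookkeeping with~\eqref{eq:norm-ineq} and the finiteness of $\Psi$ and $\tilde\Psi$.
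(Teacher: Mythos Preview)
Your proof is correct and follows essentially the same approach as the paper's: both argue by contrapositive, produce the admissible $\Xi'$ with $d(\Xi')=\det T\cdot d(\Xi)<d(\Xi)$, and show $\delta_R(\Xi,\Xi')\le C\epsilon$ via the norm inequalities~\eqref{eq:norm-ineq} and the finiteness of $\tilde\Psi\times\Psi$. The paper simply asserts the $\delta_R$ bound as ``fairly straightforward,'' whereas you supply the explicit factorization $a_1^{-1}(\mu^{-1}\beta\mu)(\mu^{-1}T[\mu])a_1\phi_1$ and track the dependence on the bounded translation $w$; this is more detailed but not a different argument.
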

\begin{proof}
    If $\Xi$ is not periodic-extreme, then there exists $\tilde\Lambda\subseteq\Lambda$ and $\tilde\Psi$ as in
    \ref{def:perext}, such that for all $\epsilon>0$ there exists $T\in GL_n(\mathbb{R})$ and
    $\phi:\tilde\Psi\times\Psi\to E(n)$ such that
    $\Xi'=\{T[\tilde\lambda]\tilde\psi\psi\phi(\tilde\psi,\psi):\tilde\lambda\in\tilde\Lambda,\tilde\psi\in\tilde\Psi,\psi\in\Psi\}$ is admissible for $K$,
    $\det T < 1$, $\|T-\mathrm{Id}\|\le\epsilon$, and $\|\phi(\tilde\psi,\psi)-\mathrm{Id}\|\le\epsilon$ for
    all $\tilde\psi\in\tilde\Psi,\psi\in\Psi$.
    Since $\Psi\times\Psi'$ is finite, we have some bound $\|\psi\tilde\psi\|,\|\tilde\psi^{-1}\psi^{-1}\|<M$.
    Consider the elements $\xi_1=\tilde\lambda_1\tilde\psi_1\psi_1$, $\xi_2=\tilde\lambda_2\tilde\psi_2\psi_2$,
    $\xi_1,\xi_2\in\Xi$ and the corresponding elements $\xi_1'=T[\tilde\lambda_1]\tilde\psi_1\psi_1\phi(\tilde\psi_1,\psi_1)$
    $\xi_2'=T[\tilde\lambda_2]\tilde\psi_2\psi_2\phi(\tilde\psi_2,\psi_2)$, $\xi_1',\xi_2'\in\Xi'$.
    Using the inequalities \ref{eq:norm-ineq}, it is fairly straightforward to see that
    $\|\xi_1^{-1}\xi_2(\xi_2')^{-1}\xi_1'\|\le C\epsilon$, where $C$ depends on $M$ and $R$.
    Therefore $\delta_R(\Xi,\Xi')$ can be made arbitrarily small, $\Xi'$ is admissible,
    and $d(\Xi') = \det T d(\Xi)<d(\Xi)$, so $\Xi$ is not strongly extreme.
\end{proof}

We now derive a general method for proving strong extremality that we will use in the following sections.

\begin{definition}
    Let $\Xi$ be an $(r,R)$-set of isometries.
    Let $\mathcal T$ be a simplicial complex whose underlying space is $\overline{\mathcal T}=\mathbb{R}^n$, whose vertices
    are in $\{\xi(0):\xi\in\Xi\}$, and whose simplices $s$ have underlying space $\overline s = \mathrm{conv}_{\xi(0)\in s} \xi(0)$
    with diameter uniformly bounded from above and inradius uniformly bounded from below.
    Let $p:\mathcal T_n\to \Xi$ be a labeling of the full-dimensional simplices, such that
    \begin{itemize}
	\item $\xi(0)\in s$ whenever $p(s) =\xi$.
	\item $\mathrm{vol}\,\mathcal P_\xi  = v$ for all $\xi$, where $\mathcal P_\xi =
	    \overline{p^{-1}(\xi)}=\bigcup_{s\text{ s.t. }p(s)=\xi}\overline{s}$.
    \end{itemize}
    Then $(\mathcal T,p)$ is called a honeycomb of $\Xi$, and $\mathcal P_\xi$, $\xi\in\Xi$, are the cells of the honeycomb.
\end{definition}

It is easy to verify that if $\Xi$ has a honeycomb with cells of volume $v$, then its mean volume is $d(\Xi)=v$.
We denote by $\Xi_s,\Xi_\xi\subset\Xi$, the set of elements $\xi\in\Xi$, such that $\xi(0)$ is a vertex
of $s\in\mathcal T$, or respectively a vertex of any simplex in $p^{-1}(\xi)$.
When we consider another set $\Xi'$ in bijection $(\cdot)'\colon\Xi\to\Xi'$ with $\Xi$,
such that $\delta_R(\Xi,\Xi')<r/2$, the triangulation $\mathcal T$ gives us a new triangulation
$\mathcal T' = \{s' = \{\xi'(0): \xi(0)\in s\} : s\in\mathcal T\}$.
The new triangulation gives new cells $\mathcal P'_{\xi} = \bigcup_{s, p(s)=\xi} \overline s'$.

For a specified honeycomb $(\mathcal T,p)$ of $\Xi$, we can consider the finite volume-minimization problem for the individual cells $\mathcal P_\phi$:
\begin{equation}\label{eq:opt-cell}
    \begin{aligned}
	\text{minimize }&\mathrm{vol}\,\mathcal P'_\phi=\sum_{s,p(s) = \phi} \mathrm{vol}\,\mathrm{conv}_{\xi(0)\in s} \xi'(0)\text, \\
	\text{over }&(\cdot)'\colon \Xi_\phi\to E(n)\text,\\
	\text{subj.\ to }&\mathrm{int}\,\xi'_1(K)\cap\mathrm{int}\,\xi'_2(K) = \emptyset \text{ for all }\xi_1,\xi_2\in\Xi_\phi\text,\\
	&N(\xi^{-1}\xi')<\epsilon \text{ for all }\xi\in\Xi_\phi\text.
    \end{aligned}
\end{equation}
Since, for a fixed $\xi\in\Xi$, $\xi'$ may appear in more than one cell-restricted problems,
$\Xi$ might be strongly extreme for $K$ without the restriction of $\mathrm{Id}\colon\Xi\to E(n)$ optimizing any of
these restricted problems. However, it is reasonable to expect, and in fact we prove, that if the restriction of the identity
optimizes all of these problems, then $\Xi$ is strongly extreme.

\begin{theorem}
    \label{thm:noaux}
    Let $\Xi$ be admissible and saturated for $K$ and let $(\mathcal T,p)$ be a honeycomb of $\Xi$.
    If there exists $\epsilon>0$ such that $\xi'=\xi$, $\xi\in \Xi_\phi$, minimizes \ref{eq:opt-cell} for every cell $\mathcal P_\phi$,
    then $\Xi$ is strongly extreme.
\end{theorem}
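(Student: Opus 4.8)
The plan is to extract, from the given $\epsilon$ and the bounded geometry of the honeycomb, explicit constants $R$ and $\epsilon'$ witnessing strong extremality, and then --- given any $\Xi'$ with $\delta_R(\Xi,\Xi')<\epsilon'$ --- to transport the honeycomb to $\Xi'$ and show cell by cell that each transported cell has volume at least $v$, which forces $\underline d(\Xi')\ge v=\overline d(\Xi)$.

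First I would fix the parameters. Let $\Delta$ be the uniform upper bound on the diameters of the simplices of $\mathcal T$, set $R=\max\{R(K),\Delta\}$, and let $\epsilon'>0$ be small, certainly at most $\min\{\epsilon,r/2\}$ and small enough that the elementary estimates below hold. The bound $\epsilon'<r/2$ guarantees, by the discussion preceding the theorem, that $\mathcal T'$ is again a triangulation of $\mathbb R^n$; in particular its cells $\{\mathcal P'_\phi\}_{\phi\in\Xi}$ partition $\mathbb R^n$ up to a null set and its full-dimensional simplices have pairwise disjoint interiors. Now suppose $\delta_R(\Xi,\Xi')<\epsilon'$. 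If $\Xi'$ is inadmissible we are done, so assume $\Xi'$ is admissible and fix a bijection $(\cdot)'$ for which the supremum defining $\delta_R$ is below $\epsilon'$.

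The core step is a single cell. Fix $\phi\in\Xi$. Since $\phi\in\Xi_\phi$ and every element of $\Xi_\phi$ lies within $\Delta\le 2R$ of $\phi(0)$, each pair $(\xi,\phi)$ with $\xi\in\Xi_\phi$ is among the pairs controlled in $\delta_R$; hence, with $g=\phi(\phi')^{-1}\in E(n)$, we have $N(\xi^{-1}g\xi')=N(\xi^{-1}\phi(\phi')^{-1}\xi')<\epsilon'\le\epsilon$ for all $\xi\in\Xi_\phi$. The map $\xi\mapsto g\xi'$ on $\Xi_\phi$ is therefore feasible for \ref{eq:opt-cell}: its image is an isometric copy of a subfamily of the admissible family $\Xi'$, hence admissible, and the required norm bound is exactly what we verified. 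Because $g$ is a Euclidean isometry it preserves volumes and commutes with $\mathrm{conv}$, so the objective value of this feasible point equals $\sum_{s,p(s)=\phi}\mathrm{vol}\,\mathrm{conv}_{\xi(0)\in s}\xi'(0)$, which in turn is $\mathrm{vol}\,\mathcal P'_\phi$ since the simplices $s'$ have disjoint interiors. Since $\xi'=\xi$ minimizes \ref{eq:opt-cell} with value $\mathrm{vol}\,\mathcal P_\phi=v$, we conclude $\mathrm{vol}\,\mathcal P'_\phi\ge v$ for every $\phi$.

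Finally I would close with a volume count. Using $\|\eta(0)\|\le N(\eta)$ and the bounds above, any two vertices of a common simplex of $\mathcal T'$ are within $\Delta+2\epsilon'\le\Delta+1$ of each other, so $\mathcal P'_\phi\subseteq B(\phi'(0),\Delta+1)$. Writing $N_t=|\{\xi'\in\Xi':\xi'(0)\in B(0,t)\}|=|\{\phi\in\Xi:\phi'(0)\in B(0,t)\}|$, the cells $\mathcal P'_\phi$ with $\phi'(0)\in B(0,t)$ are pairwise disjoint and all lie in $B(0,t+\Delta+1)$, so $N_t\,v\le\sum_{\phi'(0)\in B(0,t)}\mathrm{vol}\,\mathcal P'_\phi\le\mathrm{vol}\,B(0,t+\Delta+1)$; dividing gives $\mathrm{vol}\,B(0,t)/N_t\ge v(1+(\Delta+1)/t)^{-n}\to v$, i.e.\ $\underline d(\Xi')\ge v=\overline d(\Xi)$, as required. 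I expect the genuine work to be bookkeeping rather than conceptual: invoking non-degeneracy of $\mathcal T'$ and that its cells tile $\mathbb R^n$, and deploying the inequalities \ref{eq:norm-ineq} to pass between the relative displacements controlled by $\delta_R$ and the absolute displacements demanded by \ref{eq:opt-cell}. One point to handle with care is that a different recentering isometry $g=g_\phi$ is used in each cell; this is harmless precisely because the cell problems \ref{eq:opt-cell} are decoupled and all volumes are isometry-invariant.
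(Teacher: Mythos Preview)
Your proposal is correct and follows the same approach as the paper: recentre each cell by the isometry $g=\phi(\phi')^{-1}$ to produce a feasible point of \ref{eq:opt-cell}, use isometry invariance of volume to deduce $\mathrm{vol}\,\mathcal P'_\phi\ge v$, and conclude by a volume count over cells contained in a slightly enlarged ball. Your write-up is in fact more careful than the paper's about choosing $R$ and $\epsilon'$ explicitly, about why the pairs $(\xi,\phi)$ fall under the control of $\delta_R$, and about the disjointness of the cells of $\mathcal T'$ needed for the final inequality.
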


\begin{proof}
    Consider a set $\Xi'$ admissible for $K$ with $\delta_{M}(\Xi,\Xi')<\epsilon'$,
    where $M$ is a uniform bound on the distance between two vertices in the same cell.
    When $\epsilon'$ is small enough, then $N(\xi^{-1} \phi (\phi')^{-1} \xi') < \epsilon$ for all $\xi\in\Xi_\phi$.
    Let $\xi'' = \phi (\phi')^{-1}\xi'$ for all $\xi\in\Xi_\phi$, then $(\cdot)''$ 
    is feasible for \ref{eq:opt-cell} and  we have
    $\mathrm{vol}\,\mathcal P'_\phi = \mathrm{vol}\,\mathcal P''_\phi \ge \mathrm{vol}\,\mathcal P_\phi = v$.

    We now wish to bound the lower mean volume $\underline{d}(\Xi')$ from below. From the definition,
    $\underline{d}(\Xi') = \lim\inf_{t\to\infty} \mathrm{vol}\,B(0,t) / |\Xi'_t|$,
    where $\Xi'_t = \{\xi\in\Xi:\xi'(0)\in B(0,t)\}$. The limit does not
    change if we replace $t$ in the numerator $\mathrm{vol}\,B(0,t)$ by $t+2M$.
    Since $B(0,t+2M)$ includes all the cells $\mathcal P'_\xi$ for $\xi\in\Xi'_t$,
    the volume of the ball must be at least $v |\Xi'_t|$, and $\underline{d}(\Xi')\ge v = d(\Xi)$.
    Therefore, $\Xi$ is strongly extreme.
\end{proof}

\ref{thm:noaux} will be strong enough to prove that the densest known packing of regular pentagons is strongly extreme.
However, to prove the same for regular heptagons, we will need a stronger version that allows us to
introduce auxiliary objectives. We will consider instead of \ref{eq:opt-cell}, a modified optimization problem:
\begin{equation}\label{eq:opt-aux}
    \begin{aligned}
	\text{minimize }&\mathrm{vol}\,\mathcal P'_\phi + f_\phi( (\xi')_{\xi\in\Xi_\phi} )\text, \\
	\text{over }&(\cdot)'\colon \Xi_\phi\to E(n)\text,\\
	\text{subj.\ to }&\mathrm{int}\,\xi'_1(K)\cap\mathrm{int}\,\xi'_2(K) = \emptyset \text{ for all }\xi_1,\xi_2\in\Xi_\phi\text,\\
	&N(\xi^{-1}\xi')<\epsilon \text{ for all }\xi\in\Xi_\phi\text.
    \end{aligned}
\end{equation}

We say that the set of auxiliary functions is \textit{negligible in the aggregate} (cf. Ref.\ \cite{hales2012blueprint}, p.\ 149)
if there exist $R,\epsilon,C,$ and $T$ such that whenever $d_R(\Xi,\Xi')<\epsilon$,
we have $\sum_{\phi\in\Xi'_t}f_\phi( (\xi')_{\xi\in\Xi_\phi} ) \le C t^{n-1}$ for all $t>T$.
The auxiliary function is isometry-invariant if
$f_\phi( (\psi\xi')_{\xi\in\Xi_\phi} ) = f_\phi( (\xi')_{\xi\in\Xi_\phi} )$ for all $\psi\in E(n)$.
It is straightforward to extend
the proof of \ref{thm:noaux} to obtain

\begin{theorem}
    \label{thm:aux}
    Let $\Xi$ be admissible and saturated for $K$, let $(\mathcal T,p)$ be a honeycomb of $\Xi$,
    and let $f_\phi$, $\phi\in\Xi$, be negligible in the aggregate and isometry-invariant.
    If there exists $\epsilon>0$ such that $\xi'=\xi$, $\xi\in \Xi_\phi$, minimizes \ref{eq:opt-aux} for each cell $\mathcal P_\phi$,
    then $\Xi$ is strongly extreme.
\end{theorem}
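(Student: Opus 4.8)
The plan is to run the proof of \ref{thm:noaux} essentially verbatim, carrying the auxiliary terms along, and then to absorb the accumulated auxiliary contribution into the $O(t^{n-1})$ error that the mean-volume limit cannot see. So fix $R=M$, the uniform bound on the distance between two vertices of a common cell, take $\epsilon$ from the hypothesis, and let $\Xi'$ be admissible for $K$ with $\delta_M(\Xi,\Xi')<\epsilon'$ for $\epsilon'$ small (and $\epsilon'<r/2$, so that $\mathcal T'$ is again a triangulation). As in \ref{thm:noaux}, the inequalities \ref{eq:norm-ineq} give $N(\xi^{-1}\phi(\phi')^{-1}\xi')<\epsilon$ for every $\phi\in\Xi$ and every $\xi\in\Xi_\phi$, so the map sending $\xi$ to $\xi'' = \phi(\phi')^{-1}\xi'$ is a feasible point of \ref{eq:opt-aux} for the cell $\mathcal P_\phi$.

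Next I would extract the cell inequality in its modified form. Evaluating the objective of \ref{eq:opt-aux} at this feasible point and using that $\xi'=\xi$ is assumed to minimize,
\begin{equation*}
    \mathrm{vol}\,\mathcal P''_\phi + f_\phi\big((\xi'')_{\xi\in\Xi_\phi}\big)\ \ge\ \mathrm{vol}\,\mathcal P_\phi + f_\phi\big((\xi)_{\xi\in\Xi_\phi}\big)\ =\ v + f_\phi\big((\xi)_{\xi\in\Xi_\phi}\big)\text.
\end{equation*}
Since $\mathcal P''_\phi=\phi(\phi')^{-1}[\mathcal P'_\phi]$ is a rigid image of $\mathcal P'_\phi$ we have $\mathrm{vol}\,\mathcal P''_\phi=\mathrm{vol}\,\mathcal P'_\phi$, and by isometry-invariance $f_\phi((\xi'')_{\xi\in\Xi_\phi})=f_\phi((\phi(\phi')^{-1}\xi')_{\xi\in\Xi_\phi})=f_\phi((\xi')_{\xi\in\Xi_\phi})$; hence
\begin{equation*}
    \mathrm{vol}\,\mathcal P'_\phi\ \ge\ v - \Big(f_\phi\big((\xi')_{\xi\in\Xi_\phi}\big) - f_\phi\big((\xi)_{\xi\in\Xi_\phi}\big)\Big)\text.
\end{equation*}
Aggregating over $\phi\in\Xi'_t$ exactly as in \ref{thm:noaux} — the cells $\mathcal P'_\phi$ have pairwise disjoint interiors and are contained in $B(0,t+2M)$ — gives
\begin{equation*}
    \mathrm{vol}\,B(0,t+2M)\ \ge\ \sum_{\phi\in\Xi'_t}\mathrm{vol}\,\mathcal P'_\phi\ \ge\ v\,|\Xi'_t|\ -\ \sum_{\phi\in\Xi'_t}\Big(f_\phi\big((\xi')_{\xi\in\Xi_\phi}\big)-f_\phi\big((\xi)_{\xi\in\Xi_\phi}\big)\Big)\text.
\end{equation*}
Negligibility (applied to $\Xi'$, and to the identity bijection $\Xi\to\Xi$ for the reference sum) bounds the last sum by $O(t^{n-1})$; dividing by $|\Xi'_t|$ and letting $t\to\infty$, the $+2M$ and the $O(t^{n-1})$ correction both wash out against the leading term $v|\Xi'_t|\asymp t^n$, so $\underline d(\Xi')\ge v=\overline d(\Xi)$ and $\Xi$ is strongly extreme.

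The one genuinely new point — and the step I expect to demand the most care — is the claim that $\sum_{\phi\in\Xi'_t}\big(f_\phi((\xi')_{\xi\in\Xi_\phi})-f_\phi((\xi)_{\xi\in\Xi_\phi})\big)=O(t^{n-1})$. The upper bound on the deformed-side sum is precisely what ``negligible in the aggregate'' supplies; the reference-side sum needs the companion lower bound $\sum_{\phi\in\Xi'_t}f_\phi((\xi)_{\xi\in\Xi_\phi})\ge -O(t^{n-1})$, which is the natural two-sided reading of negligibility (and, if one prefers, can be arranged by subtracting the constant $f_\phi((\xi)_{\xi\in\Xi_\phi})$ from each $f_\phi$, which costs neither isometry-invariance nor negligibility). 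One also has to check, as already in \ref{thm:noaux}, that the index set $\Xi'_t$ — defined through $\xi'(0)$ — has cardinality and spatial extent comparable to $\Xi_t$, so that the correction is genuinely lower order than $v|\Xi'_t|$; this is immediate from the uniform $(r,R)$-bounds together with the $t\mapsto t+2M$ slack already used in the base theorem.
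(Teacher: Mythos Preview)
Your proposal is correct and is precisely the ``straightforward extension'' the paper has in mind; the paper does not write out a separate proof of \ref{thm:aux}, merely remarking that the argument for \ref{thm:noaux} carries over. Your identification of the one genuinely new ingredient---bounding the aggregated auxiliary contribution by $O(t^{n-1})$ via negligibility and isometry-invariance---and your care about the reference-side sum are on target; in the paper's actual applications the auxiliary functions vanish at the reference configuration and cancel in pairs across shared edges, so both the one-sidedness of the stated definition and the $\Xi_t$-versus-$\Xi'_t$ discrepancy are non-issues there.
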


\subsection{Double lattices}\label{sec:doublatt}

In this section, we focus on convex bodies in 2 dimensions. We recapitulate some of the theory
of double lattices, due to Kuperberg and Kuperberg \cite{kuperberg1990double} and Mount
\cite{mount1991densest}.

\begin{definition}
    A chord of a convex body $K$ is a line segment whose endpoints lie on the boundary of $K$.
    A chord is an affine diameter if there is no longer chord parallel to it.
\end{definition}

\begin{definition}
    The convex hull of two parallel chords that are half the length of the parallel affine diameter
    is called a half-length parallelogram.
\end{definition}

An affine diameter of $K$ does not in general uniquely determine a half-length parallelogram (e.g., when
a parallel edge of sufficient length exists), but it always uniquely determines its area.

\begin{definition}
    A set $\Lambda\subset E(n)$ is called a (full rank) double lattice if it is an $(r,R)$-set for some
    $r>0$ and $R<\infty$, it consists of translations and point reflections, it is closed under composition
    and inversion, and it is not a lattice (that is, includes at least one point reflection).
\end{definition}

An $n$-dimensional double lattice is generated by a lattice and a point reflection, or alternatively
by reflections about $n+1$ affine-independent points or by reflections about the $2n$ vertices of
a parallelepiped.

\begin{theorem}[Kuperberg and Kuperberg, Mount]\label{thmkk}
    For a planar convex body $K$, an admissible double lattice of smallest mean area 
    is generated by reflection about the vertices of a half-length parallelogram.
\end{theorem}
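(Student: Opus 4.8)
The plan is to reduce admissibility of a double lattice to a single lattice-packing condition and then study a covolume-minimizing configuration. A double lattice $\Gamma$ for $K$ is precisely the data of a full-rank translation lattice $\Lambda$ (its index-two translation subgroup) together with the coset $C=c+\tfrac12\Lambda$ of centers of its point reflections; the orbit $\{\gamma(0):\gamma\in\Gamma\}$ is the union of the $\Lambda$-cosets $\Lambda$ and $2c+\Lambda$, so the mean area of $\Gamma$ is $\tfrac12\,\mathrm{covol}\,\Lambda$. Unwinding the disjointness conditions, $\Gamma$ is admissible for $K$ if and only if, for one (equivalently any) $c\in C$, the $\Lambda$-translates of $T_c:=K\cup\sigma_c(K)$ have pairwise disjoint interiors; since $K$ is convex one computes $T_c-T_c=(K-K)\cup 2(K-c)\cup 2(c-K)$, a bounded centrally-symmetric star body, and the condition becomes $\Lambda\cap\mathrm{int}(T_c-T_c)=\{0\}$. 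So the assertion to prove is that the minimum of $\mathrm{covol}\,\Lambda$, over all $c$ and all lattices $\Lambda$ with $\Lambda\cap\mathrm{int}(T_c-T_c)=\{0\}$, is attained by a configuration in which $C$ contains the four vertices of a half-length parallelogram of $K$ and $\Lambda$ is generated by the point reflections about those vertices.

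First I would prove a minimizer exists. Admissibility gives density at most one, so $\mathrm{covol}\,\Lambda\ge 2\,\mathrm{vol}\,K>0$, and since at least one admissible double lattice exists it suffices to consider those of mean area at most some fixed $V_0$, i.e.\ $\mathrm{covol}\,\Lambda\le 2V_0$. For these the condition $\Lambda\cap\mathrm{int}(K-K)=\{0\}$ bounds the shortest vector of $\Lambda$ below, so $\Lambda$, and with it $\tfrac12\Lambda$, stays in a compact family of lattices while $C$ ranges over a compact family of cosets; the constraints being closed, a minimizer exists, and one sees $C$ must meet $\partial K$, so we may take a representative $c\in C\cap\partial K$. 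At the minimum $\Lambda$ is a critical lattice of the star body $T_c-T_c$ and the pair $(\Lambda,c)$ admits no covolume-decreasing deformation; the planar theory of critical lattices then forces at least three pairs $\pm\lambda_1,\pm\lambda_2,\pm\lambda_3$ of lattice points on $\partial(T_c-T_c)$, with $\lambda_3$ a $\pm$-combination of $\lambda_1,\lambda_2$.

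The crux is then to read off this contact pattern. The boundary $\partial(T_c-T_c)$ is assembled from arcs of $\partial(K-K)$ and arcs of $\partial\big(\pm 2(K-c)\big)$; a contact of the latter kind has the form $\lambda_i=2(w_i-c)$ with $w_i\in\partial K$, whereas a contact on $\partial(K-K)$ records an affine diameter of $K$. Using convexity, the relation among the $\lambda_i$, and the optimality in $c$, I would argue that the three contacts may be taken of the second kind with $w_3=w_1+w_2-c$, so that $c,w_1,w_3,w_2$ are the vertices of a parallelogram $P$, each of whose four sides is a chord of $K$; and that one opposite pair of those sides consists of chords whose length is exactly half the parallel affine diameter (equivalently, the corresponding $\pm\lambda_i$ lies also on $\partial(K-K)$). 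Then $P$ is a half-length parallelogram and $\Lambda=\langle 2(w_1-c),2(w_2-c)\rangle$ is generated by the point reflections about its vertices. Conversely, starting from any affine diameter of $K$ and the two flanking chords of half that length — which exist and are essentially unique because the chord-length function along the pencil of lines parallel to the diameter is concave and attains its maximum on the diameter — the same concavity shows the resulting double lattice keeps all reflection centers off $\mathrm{int}\,K$ and all nonzero translations off $\mathrm{int}(K-K)$, hence is admissible, with mean area $2\,\mathrm{vol}\,P$; so the minimum really is attained among half-length-parallelogram double lattices, and optimizing over the affine diameter picks out the densest.

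The main obstacle is precisely the structural step: proving that a critical lattice of $T_c-T_c$ at an optimal $c$ realizes its three boundary contacts as a half-length parallelogram. This means determining which arcs of $\partial(T_c-T_c)$ the contacts lie on, extracting the half-length property from the interplay of lattice criticality with optimality in $c$, and handling the degenerate cases — flat pieces of $\partial K$, non-unique affine diameters or half-length chords, and coincident or collinear contacts — where the contact data alone do not determine the configuration yet the conclusion still holds.
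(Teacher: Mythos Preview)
The paper does not prove this theorem; it is stated with attribution to Kuperberg--Kuperberg \cite{kuperberg1990double} and Mount \cite{mount1991densest}, and the paper only remarks that the former reach the result through \textit{extensive parallelograms} (inscribed parallelograms each of whose sides exceeds half the parallel affine diameter) while the latter proves directly that half-length parallelograms suffice. There is thus no proof in the paper to compare your attempt against.

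Your route is genuinely different from what is described of those sources: you recast the problem as a critical-lattice problem for the centrally symmetric star body $T_c-T_c$ and then try to read the half-length parallelogram off the contact pattern of a critical lattice together with optimality in $c$. This is an attractive geometry-of-numbers reformulation; the cited arguments instead work directly with inscribed parallelograms and monotonicity of the area functional, which sidesteps the contact-classification problem entirely.

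Two issues remain in your proposal. A minor one: the equivalence ``$\Gamma$ admissible $\Leftrightarrow$ the $\Lambda$-translates of $T_c$ have pairwise disjoint interiors'' omits the condition $c\notin\mathrm{int}\,K$, which is precisely the disjointness of $K$ and $\sigma_c(K)$ inside a single copy of $T_c$; without it the reduction to $\Lambda\cap\mathrm{int}(T_c-T_c)=\{0\}$ does not capture admissibility (and for non-convex $T_c$ one should also check that $\mathrm{int}(T_c-T_c)$ agrees with $\mathrm{int}\,T_c-\mathrm{int}\,T_c$). The serious one is the step you yourself flag as the ``main obstacle'': deducing from three contact pairs of a critical lattice of $T_c-T_c$, at an optimal $c$, that the configuration is an inscribed parallelogram with one opposite pair of sides at exactly half the affine diameter. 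Your sketch asserts this (``I would argue that the three contacts may be taken of the second kind'') but supplies no mechanism---in particular, why contacts must fall on the arcs coming from $\pm 2(K-c)$ rather than on $\partial(K-K)$, and how optimality in $c$ forces the half-length equality, are both substantive and not addressed. As written, the proposal is an outline whose central lemma is left open.
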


Kuperberg and Kuperberg use extensive parallelograms, inscribed parallelograms with all edge lengths greater than half the length of the parallel affine diameter.
They later restrict the analysis to the set of half-length parallelograms \cite{kuperberg1990double}.
Mount gives an explicit proof that it suffices to consider only the half-length parallelograms \cite{mount1991densest}.

\begin{figure}
\begin{center}
\begin{asy}
import cseblack;
import olympiad;
usepackage("amssymb");
size(200);
pair K0=D("\mathbf{k}_0,\mathbf{p}_1",(1,0),E);
pair K1=D("\mathbf{k}_1",rotate(360./9.)*K0,NE);
pair K2=D("\mathbf{k}_2",rotate(360./9.)*K1,N);
pair K3=D("\mathbf{k}_3",rotate(360./9.)*K2,NW);
pair K4=D("\mathbf{k}_4",rotate(360./9.)*K3,W);
pair K5=D("\mathbf{k}_5",rotate(360./9.)*K4,W);
pair K6=D("\mathbf{k}_6",rotate(360./9.)*K5,SW);
pair K7=D("\mathbf{k}_7",rotate(360./9.)*K6,S);
pair K8=D("\mathbf{k}_8",rotate(360./9.)*K7,SE);
path KK = D(K0--K1--K2--K3--K4--K5--K6--K7--K8--cycle,linewidth(1));
pair P1=(1., 0.);
pair P2=D("\mathbf{p}_6",(0.3706844903100875, -0.87104878486755),SE);
pair P3=D("\mathbf{p}_5",(-0.5991618200828668, -0.7478489484525013),SW);
pair P4=D("\mathbf{p}_4",(-0.9396926207859084, 0.24639967283009717),NE);
pair P5=D("\mathbf{p}_3",(-0.4058322193092865, 0.882629724233649),N);
pair P6=D("\mathbf{p}_2",(0.5640140910836678, 0.7594298878186004),NE);
path PP = P2--P3--P5--P6--cycle;
fill(PP,lightgray);
D(PP);
D(P1--P4);
pair P4x = P4;

pair P1=(1., 0.);
pair P2=(0.4403241757329789, 0.8308422937423627);
pair P3=(-0.5295221346599754, 0.8308422937423627);
pair P4=(-0.9396926207859084, 0.);
pair P5=(-0.5295221346599754, -0.8308422937423627);
pair P6=(0.4403241757329789, -0.8308422937423627);
path PP = P2--P3--P5--P6--cycle;
D(PP,dashed);
D(P1--P4,dashed);

MC ("y",D((P4-(0.1,0))--(P4x-(0.1,0)),Arrow),0.5,W);
\end{asy}
\scalebox{.8}{
    \tikzsetnextfilename{pentplot}
    \begin{tikzpicture}
	\begin{axis}[
	ylabel style={rotate=-90},
		xlabel={$y$}, ylabel={$A$},
		xmin=-1., xmax=1.,
		ymin=1.60, ymax=1.62,
		ytick={1.60,1.61,1.62}
	    ]
	    \addplot[domain=-0.30540728933227923:0.30540728933227923] {1.6115786662088993+ 0.033061308882837155*x^2};
	    \addplot[domain=0.30540728933227923:1] {1.633850356689106 - 0.0797264910024892*x + 0.05533299936304392*x^2};
	    \addplot[domain=-1:-0.30540728933227923] {1.633850356689106 + 0.0797264910024892*x + 0.05533299936304392*x^2};
	\end{axis}
    \end{tikzpicture}
}
\caption{\label{fig:ninegon}
Half-length parallelograms in the regular 9-gon. That the minimum at $y=0$ is not the global minimum, and therefore that the densest
double-lattice packing is not given by the symmetric arrangement as for pentagons and heptagons, was possibly first noticed by
Graaf, Roij, and Dijkstra \cite{Graaf2011}.}
\end{center}
\end{figure}
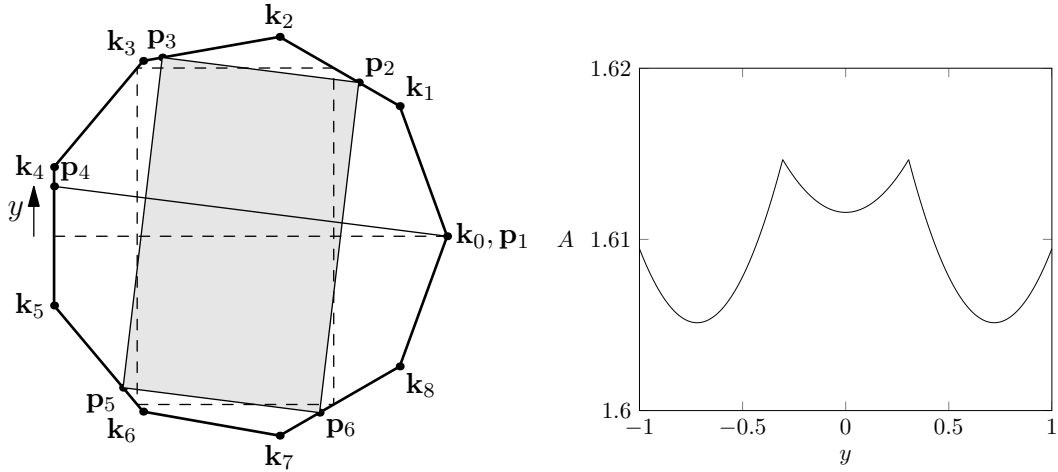

\Ref{fig:ninegon} illustrates some affine diameters and half-length parallelograms in the regular 9-gon.
We denote one endpoint of the affine diameter by $\mathbf{p}_1$ and the parallelogram vertices and other affine diameter endpoint as $\mathbf{p}_i$, $i=2,\ldots,6$
in counterclockwise order. The edges $\mathbf{p}_2\mathbf{p}_3$ and $\mathbf{p}_5\mathbf{p}_6$ will be the ones
parallel to the affine diameter.
The space of such labeled half-length parallelograms of a body $K$ is a circle, in the sense that we can
continuously parameterize $\mathbf{p}_i(t)$, $i=1,\ldots,6$, $t\in S^1$.  In fact, this parameterization can be piecewise linear \cite{mount1991densest}.
This parameterization specifies an affine diameter, a specification that does not change the half-length parallelogram or the generated double lattice. 
In the interior of each linear piece of the parameterization, either (1) one endpoint of the affine diameter is stationary at a vertex of $K$,
and the other points either move at a constant speed (possibly zero) along the interior of an edge or are stationary at a vertex
or (2) all parallelogram vertices are stationary while the affine diameter moves along two parallel edges. The piecewise linear
parameterization of augmented parallelograms can be converted to a piecewise linear parameterization of parallelograms by eliminating
the intervals of type 2. A half-length parallelogram is called \textit{pivotal}
if it sits at the boundary of two linear pieces, that is, there is a discontinuity in the direction of motion of at least one point $\mathbf{p}_i$.

In order to prove our main theorem in \ref{sec:genpoly}, we will need to know that the double lattice we wish to show
is strongly extreme does not have any of the vertices of the half-length parallelogram coincident with
vertices of the polygon $K$. This will generically follow from the fact that the double lattice packing is
an isolated minimum. In some exceptional cases, which must be treated separately, it does not.
We define the following exceptional types of parallelograms, illustrated in \ref{fig:except}:

\begin{figure}
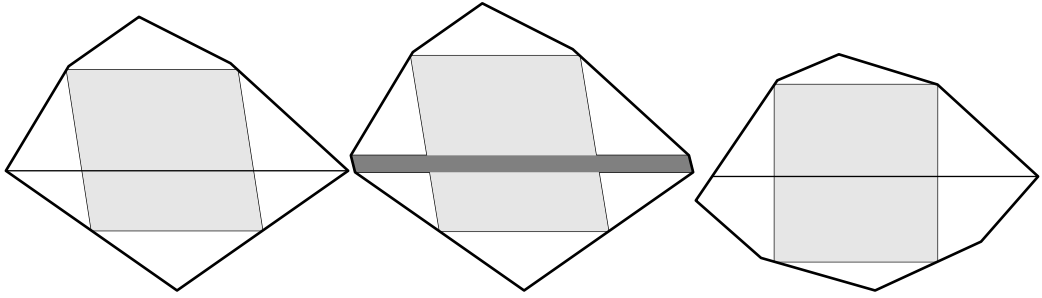

\begin{center}
\begin{asy}
import cseblack;
import olympiad;
usepackage("amssymb");
size(130);

path PP = D((96./271.,160./271.)--(-175./271.,160./271.)--(-1./2.,-7./20.)--(1./2.,-7./20.)--cycle);
fill(PP,lightgray);
path AD = D((-1,0)--(1,0));
path KK = D((1,0)--(5./16.,44./70.)--(-2./9.,9./10.)--(-19./30.,55./90.)--(-1.,0.)--(0,-7./10.)--cycle,linewidth(1));

\end{asy}
\begin{asy}
import cseblack;
import olympiad;
usepackage("amssymb");
size(130);

pair dx = (-1/40.,1/10.);
path PP = D(((96./271.,160./271.)+dx)--((-175./271.,160./271.)+dx)--(-1./2.,-7./20.)--(1./2.,-7./20.)--cycle);
path AD = D((-1,0)--(1,0)--((1,0)+dx)--((-1,0)+dx)--cycle);
fill(PP,lightgray);
fill(AD,gray);
path KK = D((1,0)--((1,0)+dx)--((5./16.,44./70.)+dx)--((-2./9.,9./10.)+dx)--((-19./30.,55./90.)+dx)--((-1,0)+dx)--(-1.,0.)--(0,-7./10.)--cycle,linewidth(1));

\end{asy}
\begin{asy}
import cseblack;
import olympiad;
usepackage("amssymb");
size(130);

path PP = D((13./34.,-89./170.)--(256./669.,1888./3345.)--(-413./669.,1888./3345.)--(-21./34.,-89./170.)--cycle);
fill(PP,lightgray);
path AD = D((-1.,0.)--(1.,0.));
path KK = D((1.,0.)--(256./669.,1888./3345.)--(-(2./9.), 3./4.)--(-(3./5.), 59./100.)--(-(11./10.), -(59./400.))--(-(7./10.), -(1./2.))--(0., -(7./10.))--(13./20., -(2./5.))--cycle,linewidth(1));

\end{asy}
\caption{\label{fig:except}
Three examples of the exceptional cases defined in \ref{def:except}. Left and middle: exceptional half-length parallelograms of type I.
Right: exceptional half-length parallelogram of type II.
}
\end{center}
\end{figure}

\begin{definition}\label{def:except}
    \begin{enumerate}
	\item If $\mathbf{p}_2$ and $\mathbf{p}_6$ are in the interiors of edges that meet at $\mathbf{p}_1$ and
	    $\mathbf{p}_3$ and $\mathbf{p}_5$ are in the interiors of edges that meet at $\mathbf{p}_4$, then
	    the parallelogram is exceptional of type I.
	\item If the affine diameter parallel to $\mathbf{p}_2\mathbf{p}_3$ is not unique,
	    then if $\mathbf{p}_2$ and $\mathbf{p}_3$ are in the interior of edges
	    that have as endpoints the endpoints of one such affine diameter
	    and $\mathbf{p}_5$ and $\mathbf{p}_6$ are in the interior of edges
	    that have as endpoints the endpoints of another such affine diameter, the
	    parallelogram will also be considered exceptional of type I.
	\item If $\mathbf{p}_3$ and $\mathbf{p}_4$ are in the interior of the same edge, and
	    $\mathbf{p}_2$ is at a vertex, then the parallelogram is exceptional
	    of type II. It can also be exceptional of type II if the same situation occurs
	    with $(\mathbf{p}_2,\mathbf{p}_1,\mathbf{p}_3)$,
	    $(\mathbf{p}_5,\mathbf{p}_4,\mathbf{p}_6)$, or 
	    $(\mathbf{p}_6,\mathbf{p}_1,\mathbf{p}_5)$ in place of 
	    $(\mathbf{p}_3,\mathbf{p}_4,\mathbf{p}_2)$.
    \end{enumerate}
\end{definition}

\begin{theorem}
    \label{thm:nopivot}
    When the half-length parallelogram $\mathbf{p}_2\mathbf{p}_3\mathbf{p}_5\mathbf{p}_6$ 
    is an isolated local minimum of area in the space of half-length parallelograms of a convex polygon $K$, then either (1) it is not pivotal, or (2)
    it is exceptional of type I.
\end{theorem}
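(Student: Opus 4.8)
The plan is to analyze the piecewise-linear parameterization $\mathbf{p}_i(t)$ directly at a pivotal parallelogram, and show that unless we are in the exceptional type I situation, a pivot forces a strict decrease of the area to one side, contradicting that the parallelogram is a local minimum. So suppose the half-length parallelogram $\Pi = \mathbf{p}_2\mathbf{p}_3\mathbf{p}_5\mathbf{p}_6$ is a local minimum and is pivotal, meaning at least one $\mathbf{p}_i$ has a jump in its direction of motion at $t_0$. First I would recall the structure of the linear pieces: in a type-(1) interval one endpoint of the affine diameter (say $\mathbf{p}_1$) is pinned at a polygon vertex and each other $\mathbf{p}_j$ either moves at constant speed along a fixed edge of $K$ or sits at a vertex; in a type-(2) interval all four parallelogram vertices are frozen. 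Since type-(2) intervals contribute nothing to the area function and get eliminated in passing to the parameterization of parallelograms, a pivot of $\Pi$ itself (as opposed to of the augmented parallelogram) occurs precisely when the set of edges/vertices hosting the $\mathbf{p}_j$ changes — i.e. some $\mathbf{p}_j$ transitions from the interior of one edge to a vertex, or from a vertex to the interior of an adjacent edge.

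Next I would set up the area as a function of the motion. Parameterize the incoming and outgoing linear pieces by one-sided velocities of the $\mathbf{p}_i$; the half-length constraints (each of $\mathbf{p}_2\mathbf{p}_3$ and $\mathbf{p}_5\mathbf{p}_6$ has length half that of the parallel affine diameter $\mathbf{p}_1\mathbf{p}_4$, and the two chords are parallel and centrally placed) cut the configuration space down, and the area $A(\Pi)$ is a smooth (indeed quadratic, as in Figure~\ref{fig:ninegon}) function along each piece. The key computation is the left and right derivatives $A'(t_0^-)$ and $A'(t_0^+)$. For $\Pi$ to be an isolated local minimum we need $A'(t_0^-)\le 0\le A'(t_0^+)$, and not both zero with a degenerate second-order behavior — but since along each piece $A$ is analytic, being an isolated local min at a one-sided-regular point actually forces $A'(t_0^-)<0<A'(t_0^+)$ unless the left and right pieces glue into a single analytic arc, i.e. unless there is in fact \emph{no} genuine pivot in the parallelogram parameterization. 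So the real content is: show that a genuine pivot with $A'(t_0^-)<0<A'(t_0^+)$ can only happen in the exceptional type I configuration. I would do this by a case analysis on which vertices $\mathbf{p}_j$ are pinned at polygon vertices at $t_0$. The chord endpoints $\mathbf{p}_2,\mathbf{p}_3$ and $\mathbf{p}_5,\mathbf{p}_6$ move so as to keep the chords parallel to the affine diameter and half its length; $\mathbf{p}_1$ (or $\mathbf{p}_4$) pinned at a vertex is the generic type-(1) behavior and by itself does not produce a pivot of $\Pi$. A pivot of $\Pi$ requires one of the chord-endpoint vertices $\mathbf{p}_2,\mathbf{p}_3,\mathbf{p}_5,\mathbf{p}_6$, or the diameter endpoint $\mathbf{p}_4$, to hit a polygon vertex; unwinding the half-length and parallelism constraints in each such case, one finds that the direction of motion and sign of $A'$ is actually continuous through $t_0$ — i.e. no true kink in $A$ — \emph{except} exactly when $\mathbf{p}_2,\mathbf{p}_6$ both sit in the interiors of the two edges meeting at $\mathbf{p}_1$ and $\mathbf{p}_3,\mathbf{p}_5$ both sit in the interiors of the two edges meeting at $\mathbf{p}_4$ (the type I picture), or the degenerate-affine-diameter variant of it in Definition~\ref{def:except}(2). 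In those configurations the edge containing a chord endpoint can switch, producing a genuine one-sided kink that is consistent with a local minimum.

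I would organize the case analysis around the combinatorial type of $\Pi$ relative to $K$: record, for each of $\mathbf{p}_1,\ldots,\mathbf{p}_6$, whether it lies at a vertex of $K$ or in the relative interior of an edge, and which edges are incident. A pivot means this combinatorial type changes. For each admissible change I would write the linear motion explicitly (the affine diameter condition ties the motion of $\mathbf{p}_1,\mathbf{p}_4$ to the supporting lines of $K$ there, and the half-length parallelogram conditions then determine $\mathbf{p}_2,\mathbf{p}_3,\mathbf{p}_5,\mathbf{p}_6$), and check whether the resulting area function is $C^1$ across the transition. The bookkeeping is the bulk of the work but each individual case is a short linear-algebra computation; the cases not yielding type I all turn out to be $C^1$, hence cannot host an isolated local minimum at a pivot.

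\textbf{Main obstacle.} The hard part will be making the case analysis genuinely exhaustive without drowning in subcases: the augmented-parallelogram parameterization has several qualitatively different local forms (endpoint pinned vs. both chord endpoints moving, edges parallel to the affine diameter of varying lengths, the non-unique-affine-diameter situation), and I must be careful that "eliminating type-2 intervals" does not hide a pivot, and that the exceptional type II configuration is correctly excluded (there the parallelogram is pivotal but, as the theorem allows, it simply is not an isolated local minimum, or it is but falls under a separately-handled degeneracy — I need to confirm which). Pinning down exactly which transitions force $A'(t_0^-)=A'(t_0^+)$ versus permit a strict sign change, and verifying that the strict-sign-change transitions are \emph{precisely} those in Definition~\ref{def:except}(1)–(2), is where all the care goes.
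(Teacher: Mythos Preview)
Your high-level framing — compare the one-sided area derivatives at a pivot and rule out the sign pattern $A'(t_0^-)\le 0\le A'(t_0^+)$ — is reasonable, but the argument as written has two genuine gaps.

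First, the central technical claim that the area is $C^1$ across non-type-I pivots is asserted without computation and is almost certainly false. When, say, $\mathbf{p}_2$ sits at a polygon vertex, the forward and backward evolutions move it along edges of different slopes $\phi_2\neq\chi_2$; the constraint equations then determine \emph{all} the velocities $c_i$ in terms of these edge angles, and the area derivative depends on them. There is no mechanism forcing the two one-sided derivatives to coincide. Second, even granting $C^1$, your inference ``hence cannot host an isolated local minimum'' is wrong: a piecewise-analytic $C^1$ function can have an isolated minimum where the first derivatives match and the second derivatives differ (e.g.\ $t^2$ on one side, $2t^2$ on the other). So neither the claim nor the deduction from it stands.

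The paper's argument is organized quite differently and never computes the backward evolution. Assuming the forward area slope is nonnegative (otherwise done), it \emph{extends the forward linear motion to negative times}, producing parallelograms $\mathbf{q}_2(t)\mathbf{q}_3(t)\mathbf{q}_5(t)\mathbf{q}_6(t)$ of area $A(t)\le A(0)+O(t^2)$ that are no longer inscribed: since the configuration is pivotal, at least one $\mathbf{q}_i(t)$ exits $K$. The substantive work is then, for each possible exterior point, to construct an explicit correction back onto $\partial K$ that preserves the half-length parallelogram conditions, and to compute the resulting area $A'(t)$. In each case the correction contributes a linear term with a definite sign, so $A'(t)<A(0)$ for small $t<0$, contradicting local minimality. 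The type I exception emerges precisely as the equality case of an inequality comparing the parallelogram height $h$ to $\tfrac{1}{\cot\chi_3-\cot\phi_2}+\tfrac{1}{\cot\chi_6-\cot\phi_5}$ when the diameter endpoint $\mathbf{q}_4$ is the one that exits. The case analysis is by \emph{which points go exterior under backward extension of the forward motion}, not by combinatorial type change, and multiple exterior points are handled by composing the single-point corrections. This extend-then-correct construction is the idea your proposal is missing.
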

\begin{proof}
    Assume that the given parallelogram is pivotal.
    If the parallel affine diameter is unique, then at least one endpoint
    is a vertex of $K$. If only one endpoint is a vertex let it be $\mathbf{p}_1$. Otherwise, let $\mathbf{p}_1$ be the
    vertex that makes the smaller angle between the adjoining counterclockwise edge and the affine diameter.
    If the affine diameter is not unique, all of the subsequent analysis will be equivalent if we remove
    from $K$ the parallelogram traced out by the affine diameters and identify the two extreme affine diameters.
    In the modified version of $K$, the affine diameter is unique.
    We may pick our orientation, scale, and origin without loss of generality
    so that the affine diameter is horizontal, of length 2, and bisected by the origin.

\begin{figure}
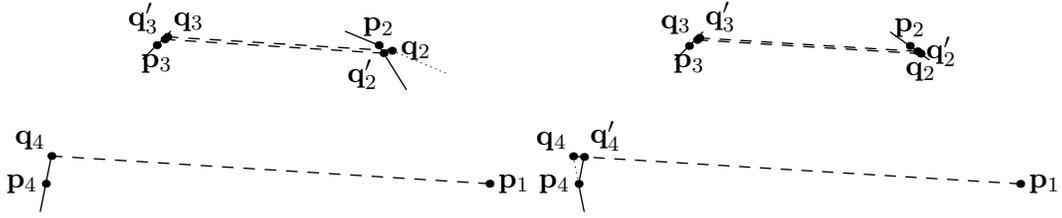

\begin{center}
\begin{asy}
import cseblack;
import olympiad;
usepackage("amssymb");
size(200);

path E4 = D((-.82,-0.1)--(-0.78,0.1));
pair P1 = D("\mathbf{p}_1",(0.8,0),E);
pair P4 = D("\mathbf{p}_4",(-0.8,0),W);
path E2 = D((0.28,0.55)--(0.4,0.5));
path E2N = D((0.64,0.4)--(0.4,0.5),dotted);
path D2 = D((0.5,0.34)--(0.4,0.5));
pair P2 = D("\mathbf{p}_2",(0.4,0.5),N);
path E3 = D((-0.45,0.45)--(-0.35,0.55));
pair P3 = D("\mathbf{p}_3",(-0.4,0.5),S);
path AD = D((-0.78,0.1)--(0.8,0),dashed);
pair Q4 = D("\mathbf{q}_4",(-0.78,0.1),NW);
pair Q2 = D("\mathbf{q}_2",(0.448, 0.48),E);
pair Q3 = D("\mathbf{q}_3",(-0.362, 0.53),NE);
path QQ = D(Q2--Q3,dashed);
pair Q2p = D("\mathbf{q}'_2",(0.418116, 0.471014),SW);
pair Q3p = D("\mathbf{q}'_3",(-0.371884, 0.521014),NW);
path QQp = D(Q2p--Q3p,dashed);

\end{asy}
\begin{asy}
import cseblack;
import olympiad;
usepackage("amssymb");
size(200);

path E4 = D((-.78,-0.1)--(-0.8,0.));
path E4N = D((-.8,-0.)--(-0.82,0.1),dotted);
path D4 = D((-.78, 0.1)--(-0.8,0.));
pair P1 = D("\mathbf{p}_1",(0.8,0),E);
pair P4 = D("\mathbf{p}_4",(-0.8,0),W);
path E2 = D((0.33,0.55)--(0.47,0.45));
pair P2 = D("\mathbf{p}_2",(0.4,0.5),N);
path E3 = D((-0.45,0.45)--(-0.35,0.55));
pair P3 = D("\mathbf{p}_3",(-0.4,0.5),S);
path AD = D((-0.82,0.1)--(0.8,0),dashed);
pair Q4 = D("\mathbf{q}_4",(-0.82,0.1),NW);
pair Q4p = D("\mathbf{q}'_4",(-0.780488, 0.097561),NE);
pair Q2 = D("\mathbf{q}_2",(0.4392, 0.472),S);
pair Q3 = D("\mathbf{q}_3",(-0.3708, 0.522),NW);
path QQ = D(Q2--Q3,dashed);
pair Q2p = D("\mathbf{q}'_2",(0.427385, 0.480439),E);
pair Q3p = D("\mathbf{q}'_3",(-0.362859, 0.52922),NE);
path QQp = D(Q2p--Q3p,dashed);

\end{asy}
\caption{\label{fig:backevol}
Extending the forward evolution of the half-length parallelogram to negative times at a pivotal parallelogram. Left: when one
of the parallelogram vertices becomes external, we construct the parallel chord of the same length. Right: when an endpoint
of the affine diameter becomes external and the intersection with the polygon is still an affine diameter, we construct
the two parallel chords of half the length of that affine diameter (bottom chord omitted).
}
\end{center}
\end{figure}

    We can evolve the parameterization forward (counterclockwise) or backward (clockwise) from the given parallelogram.
    For each $i=1,\ldots,6$, let $\mathbf{v}_i=(\cos\phi_i,\sin\phi_i)$ be a unit vector pointing, in the counterclockwise direction,
    along the edge of $K$ that $\mathbf{p}_i$ belongs to. If $\mathbf{p}_i$ is at the intersection of two edges,
    we use the counterclockwise adjoining edge.
    Let $\mathbf{u}_i=(\cos\chi_i,\sin\chi_i)$ be defined exactly the same way, except
    we use the clockwise adjoining edge in case of intersection.
    The forward evolution is by velocities $c_i$, $i=1,\ldots,6$ satisfying
    \begin{equation}\label{eq:evol}
	c_1 \mathbf{v}_1 - c_4 \mathbf{v}_4 = 2 c_2\mathbf{v}_2-2 c_3\mathbf{v}_3 = 2 c_6\mathbf{v}_6-2 c_5\mathbf{v}_5\text.
    \end{equation}
    Together with the fact that $c_1=0$, which follows from our choice of which affine diameter endpoint to label as $\mathbf{p}_1$,
    this equation determines $c_i$, $i=1,\ldots,6$, up to multiplication by a common factor. The points evolves as $\mathbf{q}_i(t) = \mathbf{p}_i+t c_i\mathbf{v}_i$.

    The area of the parallelogram is a quadratic function of $t$.
    If its slope for forward evolution is negative, then the parallelogram is not a local minimum, and
    we are done. Therefore, let it be nonnegative. We now extend the forward evolution to negative times.
    We obtain parallelograms, not necessarily inscribed, of area $A(t) \le A(0) +O(t^2)$, together with parallel segments not
    necessarily affine diameters, but double the length of the parallel sides.
    Since the initial parallelogram is pivotal, at least one point $\mathbf{q}_i(t)$, $i=2,\ldots,5$ will go into the exterior of $K$,
    and the distance between it and $K$ will grow linearly with $t$.
    We will argue that from these parallelograms, we can construct half-length parallelograms of area $A'(t) \le A(t) + a t$,
    with $a>0$. When we have shown those exist, it follows that the given parallelogram is not a local minimum.

    We first treat the case where exactly one point becomes exterior. Suppose the point that becomes exterior is $\mathbf{q}_2(t)$.
    We find the points $\mathbf{q}_2'(t) = \mathbf{p}_2 + t c_2'\mathbf{u}_2$ and $\mathbf{q}_3'(t) = \mathbf{p}_3 + t c_3'\mathbf{v}_3$
    that form a segment that is parallel to and of the same length as $\mathbf{q}_2(t)\mathbf{q}_3(t)$. A trigonometric calculation
    gives $c_2'= c_2 \sin(\phi_3-\phi_2)/\sin(\phi_3-\chi_2)$ and $c_3'=c_3 - c_2\sin(\phi_2-\chi_2)/\sin(\phi_3-\chi_2)$.
    At least up to some finite negative time $-T\le t<0$, $\mathbf{q}_2'(t)$ is on the clockwise edge adjoining $\mathbf{p}_2$ ($c_2'>0$),
    and $\mathbf{q}_3(t)$ is either on the interior of the same edge as $\mathbf{p}_3$, if the latter is in the interior of an edge,
    or else, if $\mathbf{p}_3$ is at a vertex and stationary for forward evolution, then $c_3=0$ and $c_3'<0$, so
    $\mathbf{q}_3'(t)$ is on the counterclockwise adjoining edge to $\mathbf{p}_3$. In any case,
    the parallelogram $\mathbf{q}_2'(t)\mathbf{q}_3'(t)\mathbf{q}_5(t)\mathbf{q}_6(t)$ is inscribed (therefore
    a half-length parallelogram), and its
    area is $A'(t) = A(t) - c_2 t \sin \phi_3 \sin(\phi_2-\chi_2)/\sin(\phi_3-\chi_2) + O(t^2)$.
    If $\phi_3>\pi$, the linear term is negative for negative times, and we have our desired parallelogram.
    Otherwise, $\phi_3=\pi$ and the edge $\mathbf{p}_2\mathbf{p}_3$ can be moved left while staying in $K$, so the given
    parallelogram is not an isolated minimum.
    The same argument works, \textit{mutatis mutandis}, when any one of the other three parallelogram vertices becomes exterior.

    Now suppose that $\mathbf{q}_4(t)$ becomes exterior for $t<0$. We distinguish two cases, (i) if $\chi_4-\chi_1\ge \pi$,
    then $\mathbf{p}_1\mathbf{q}'_4$, the intersection of $\mathbf{p}_1\mathbf{q}_4(t)$ with $K$, is an affine diameter, and (ii) otherwise,
    there is a parallel affine diameter $\mathbf{q}'_1(t)\mathbf{p}_4$, with $\mathbf{q}'_1(t)$ on the clockwise adjoining edge
    to $\mathbf{p}_1$. In case (i), we have $\mathbf{q}'_4(t) = \mathbf{p}_4 + l_4(t) \mathbf{u}_4$,
    where $l_4(t) = c_4 t \sin\phi_4/(\sin\chi_4 + \tfrac12 c_4 t \sin(\phi_4-\chi_4))$.
    We find points $\mathbf{q}_i'(t) = \mathbf{q}_i(t) + l_i(t) \mathbf{v}_i$, $i=2,5$ and $\mathbf{q}_i'(t) = \mathbf{q}_i(t) + l_i(t) \mathbf{u}_i$, $i=3,6$,
    such that $2(\mathbf{q}_2'(t) - \mathbf{q}_3'(t)) = 2(\mathbf{q}_6'(t) - \mathbf{q}_5'(t)) = \mathbf{p}_1 - \mathbf{q}_4'(t)$.
    For small enough negative times, this construction will yield a half-length parallelogram, and its area is
    \begin{equation}
	A'(t) = A(t) + \tfrac12c_4 \frac{\sin(\phi_4-\chi_4)}{\sin\chi_4} \left[ h - \frac{1}{\cot\chi_3-\cot\phi_2} - \frac{1}{\cot\chi_6-\cot\phi_5}\right] + O(t^2)\text,
    \end{equation}
    Where $h$ is the height of the parallelogram.
    Since we chose $2$ for the length of the affine diameter $\mathbf{p}_1\mathbf{p}_4$, we have $h\le\frac{1}{\cot\chi_3-\cot\phi_2} + \frac{1}{\cot\chi_2-\cot\phi_5}$,
    with equality only if the parallelogram is exceptional of type I. In case (ii), we have
    $\mathbf{q}'_1(t) = \mathbf{p}_1 + l_1(t)\mathbf{u}_1$, 
    where $l_1(t) = c_4 t \sin\phi_4/(\sin(\chi_1+\pi) + \tfrac12 c_4 t \sin(\phi_4-\chi_1-\pi))$.
    Again we find the points $\mathbf{q}_i'(t)$, $i=2,3,5,6$, that make up a half-length parallelogram
    with the new affine diameter, and the area of this parallelogram is
    \begin{equation}
	A'(t) = A(t) + \tfrac12c_4 \frac{\sin(\phi_4-\chi_1-\pi)}{\sin(\chi_1-\pi)} \left[ h - \frac{1}{\cot\chi_3-\cot\phi_2} - \frac{1}{\cot\chi_6-\cot\phi_5}\right] + O(t^2)\text.
    \end{equation}
    In this case too, the given parallelogram is either not a local minimum or is exceptional of type I.

    To handle the case of multiple points becoming exterior upon extension of the forward evolution to negative times,
    we can simply compose the operations we performed in each of the cases of a single exterior point.
    In each step of the composed operation, we pretend that the exterior points we have not yet handled
    and we are not handling right now are actually on the boundary of the polygon. Since in each step
    of the operation we obtain a linear-order reduction to the area of the parallelogram, the composition
    also provides a linear-order reduction. Therefore, the given parallelogram cannot be an isolated
    local minimum.
\end{proof}

\begin{theorem}
    \label{thm:novtx}
    If the half-length parallelogram $\mathbf{p}_2\mathbf{p}_3\mathbf{p}_5\mathbf{p}_6$
    is not pivotal and is an isolated local minimum of area in the space of half-length parallelograms of a convex polygon $K$ then either (1)
    all its vertices and at least one affine diameter endpoint are in the interior of polygon edges, or (2) it is exceptional of type II.
\end{theorem}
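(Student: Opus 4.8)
The plan is to mimic the structure of the proof of Theorem~\ref{thm:nopivot}, now working at a \emph{non-pivotal} half-length parallelogram, where the forward and backward evolutions are both well-defined (no discontinuity in the directions of motion of the $\mathbf{p}_i$). The quantity to exploit is again that the area $A(t)$ of the evolving parallelogram is quadratic in $t$, and that at an isolated local minimum its first derivative must vanish at $t=0$ (both for forward and backward evolution, which here are genuinely negatives of one another) and its second derivative must be \emph{strictly} positive. So I would first set up the parameterization exactly as before: choose orientation, scale, and origin so the affine diameter $\mathbf{p}_1\mathbf{p}_4$ is horizontal of length $2$ bisected by the origin; write the edge-direction vectors $\mathbf{v}_i$; and record the constraint system~\eqref{eq:evol} together with the normalization $c_1 = 0$ coming from the labeling convention, which determines the $c_i$ up to a common scale. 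The claim to be proved is a statement about which of the $\mathbf{p}_i$ can sit at vertices of $K$; so the contrapositive is: if some $\mathbf{p}_i$ that the theorem forbids to be at a vertex is in fact at a vertex, then either the parallelogram is pivotal (excluded by hypothesis) or it fails to be an isolated local minimum.

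The combinatorial core is a case analysis over which of the six points $\mathbf{p}_1,\dots,\mathbf{p}_6$ lie at vertices of $K$. First I would dispose of the case in which an affine diameter endpoint is at a vertex: if \emph{both} $\mathbf{p}_1$ and $\mathbf{p}_4$ were at vertices, or if the one endpoint forced to a vertex and one of the parallelogram vertices coincide in a way that pins the direction of motion, the evolution would have a kink and the parallelogram would be pivotal --- contradicting the hypothesis. (More carefully: being non-pivotal means the $c_i\mathbf{v}_i$ vary continuously through $t=0$; I would check that each configuration where too many points are vertices forces a discontinuity.) That leaves configurations where exactly the allowed number of points are at vertices. For each such configuration I compute, from~\eqref{eq:evol}, the linear coefficient of $A(t)$; since we are at a local minimum this coefficient is zero, so the parallelogram sits at a critical point of the area restricted to the current linear piece. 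Then I examine the second-order term: the $O(t^2)$ coefficient is an explicit trigonometric expression in the $\phi_i$ (and, where relevant, in the height $h$), and I must show it is $\le 0$ unless the configuration is exceptional of type~II, contradicting the assumption that $t=0$ is an \emph{isolated} minimum (a zero or negative second derivative means area does not strictly increase, so there is a competing half-length parallelogram of area $\le A(0)$ arbitrarily close by, possibly after the same ``push back onto the boundary'' construction used in Theorem~\ref{thm:nopivot} when $\mathbf q_3$ and $\mathbf q_4$ share an edge). The type~II configuration --- $\mathbf{p}_3$ and $\mathbf{p}_4$ in the interior of one edge with $\mathbf{p}_2$ at a vertex (and its three symmetric variants) --- is exactly the boundary case where that trigonometric inequality degenerates to an equality, so it must be carved out as an exception.

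I expect the main obstacle to be the bookkeeping in the case where one endpoint of the affine diameter is at a vertex while the parallelogram vertices are in edge interiors but an \emph{adjacent} parallelogram vertex lies on the same edge as $\mathbf{p}_1$ or $\mathbf{p}_4$. There the forward evolution may keep that vertex pinned to the edge through the endpoint, the system~\eqref{eq:evol} degenerates, and one has to argue via the auxiliary ``construct a parallel chord of the same length'' or ``construct two parallel half-length chords'' moves from the proof of Theorem~\ref{thm:nopivot} --- extending the evolution to small negative times with area $A(t) = A(0) + O(t^2)$, then correcting to an inscribed half-length parallelogram at a cost $a t$ with $a>0$. Verifying that the correction cost genuinely has a definite sign except in the type~II degenerate case, and that the pivotal-versus-isolated-minimum dichotomy of Theorem~\ref{thm:nopivot} does not already subsume the configuration, is the delicate part; the rest is a finite enumeration. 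Throughout I would reuse, without reproving, the trigonometric identities for $c_2', c_3', l_i(t)$ and the area formulas displayed in the proof of Theorem~\ref{thm:nopivot}, since the non-pivotal hypothesis only restricts \emph{which} of those branches occur, not the formulas themselves.
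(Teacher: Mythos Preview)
Your plan is much more elaborate than what is needed, and it misses the single observation that makes the paper's proof a one-paragraph computation. The key point is this: \emph{at a non-pivotal parallelogram, a labeled point $\mathbf{p}_i$ can sit at a vertex of $K$ only if its evolution velocity $c_i$ vanishes.} Indeed, if $c_i\neq0$ and $\mathbf{p}_i$ is at a vertex, then forward evolution moves $\mathbf{p}_i$ along one adjoining edge and backward evolution along the other, so the direction of motion is discontinuous at $t=0$ and the parallelogram is pivotal. Once you have this, there is no need for second-order area analysis, no ``push back onto the boundary'' constructions, and no delicate bookkeeping: you simply write down the explicit solution of the linear system \eqref{eq:evol} for the $c_i$ (a product of two sines in each case), set each $c_i=0$ in turn, and read off which angle coincidences force it. The few resulting cases are disposed of in a line or two each --- either a parallelogram edge lies inside a polygon edge (so you can slide it and the minimum is not isolated), or the affine diameter fails to be unique (contradiction after the standard reduction), or you land exactly in the type~II configuration.

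Your proposed route --- computing the quadratic coefficient of $A(t)$ and arguing it is nonpositive except in type~II --- is not obviously correct and is in any case aimed at the wrong target. The isolated-local-minimum hypothesis is used in the paper only to rule out the degenerate cases $\phi_2=\phi_3$ or $\phi_5=\phi_6$ (where the parallelogram can slide), not via any sign analysis of the Hessian. I would recommend abandoning the second-order program entirely and instead deriving the explicit formulas for $c_2,\dots,c_6$ from \eqref{eq:evol}; the theorem then falls out by inspection.
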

\begin{proof}
    Since the parallelogram is not pivotal, forward and backward evolution use the same linear velocities. The solution (up to common factor)
    to \ref{eq:evol} is
    \begin{equation}
	\begin{aligned}
	    c_1 &= 0\\
	    c_2 &= \sin(\phi_4-\phi_3)\sin(\phi_6-\phi_5)\\
	    c_3 &= \sin(\phi_4-\phi_2)\sin(\phi_6-\phi_5)\\
	    c_4 &= 2\sin(\phi_3-\phi_2)\sin(\phi_6-\phi_5)\\
	    c_5 &= \sin(\phi_3-\phi_2)\sin(\phi_6-\phi_4)\\
	    c_6 &= \sin(\phi_3-\phi_2)\sin(\phi_5-\phi_4)\text.
	\end{aligned}
    \end{equation}
    When the affine diameter is not unique, we perform the same modification as in the previous proof.
    The only way one of the labeled points except $\mathbf{p}_1$ can be at a vertex, is if its velocity $c_i$ vanishes.
    If $\phi_3=\phi_2$ or $\phi_5=\phi_6$, then one of the parallelogram edges is strictly contained in a polygon edge and the parallelogram
    is not a local minimum. Therefore, the only way a labeled point other than $\mathbf{p}_1$ can be stationary is if
    $\phi_4=\phi_2+\pi,\phi_3,\phi_5$, or $\phi_6-\pi$. If $\phi_4=\phi_2+\pi$, then necessarily also $\phi_1=\phi_2$, the affine
    diameter is not unique, reaching a contradiction. Similarly, we do not have $\phi_4=\phi_6-\pi$.
    If $\phi_4=\phi_3$ and $\mathbf{p}_2$ is stationary at a vertex or if $\phi_4=\phi_5$ and $\mathbf{p}_5$ is stationary at a vertex
    then the parallelogram is exceptional of type II.
\end{proof}

\subsection{Honeycomb construction}

We now describe a honeycomb associated with any double lattice packing. Let $K$ be a
convex polygon and let $\mathbf p_2\mathbf p_3\mathbf p _5\mathbf p_6$ be a half-length parallelogram,
such that $\mathbf p_2\mathbf p_3$ and $\mathbf p_6\mathbf p_5$ are half the length of and parallel to
the affine diameter $\mathbf p_1\mathbf p_4$. The double lattice generated by reflections about the
vertices of the parallelogram is $\Xi$ and the subgroup of translations is the lattice $\Lambda$.
Let $\xi_0 = \mathrm{Id}$, $\xi_1 = \mathrm{Tran}_{\mathbf p_1-\mathbf p_4}$, $\xi_2 = \mathrm{Ref}_{\mathbf{p}_2}$,
and $\xi_6 = \mathrm{Ref}_{\mathbf{p}_6}$, where $\mathrm{Ref}_\mathbf{r}$ is a reflection about $\mathbf{r}$
and $\mathrm{Tran}_\mathbf{r}$ is a translation by $\mathbf{r}$.
Let $s_2=\{\xi_0(0),\xi_1(0),\xi_2(0)\}$ and $s_6=\{\xi_0(0),\xi_6(0), \xi_1(0)\}$,
then $\mathcal{T}_2 = \{ \xi(s_2), \xi(s_6): \xi\in\Xi\}$
are the full-dimensional simplices of a triangulation $\mathcal T$ of $\mathbb{R}^2$, and
if $p(\xi(s_2))=p(\xi(s_6))=\xi$, then $(\mathcal T,p)$ is a honeycomb of the double lattice $\Xi$.
The optimization problem
of minimizing $\mathrm{vol}\,\mathcal P'_\phi$ over $(\cdot)':\Xi_\phi\to E(n)$, is equivalent for every $\phi\in\Xi$.
Therefore, to show that \ref{thm:noaux} applies, it suffices to show that the restriction of the identity
is optimal for the problem associated with the cell $\mathcal P_{\xi_0}$.

\begin{figure}
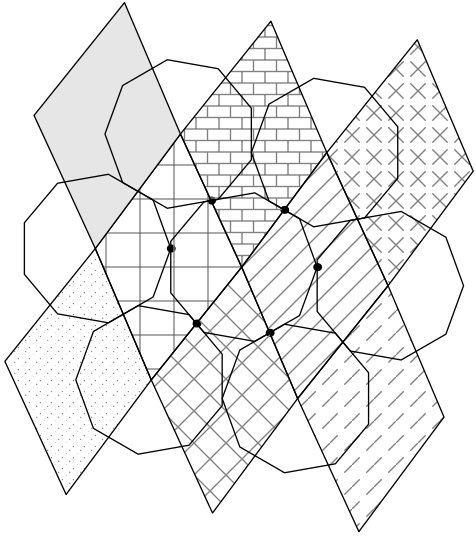

\begin{center}
\begin{asy}
import cseblack;
import olympiad;
import patterns;
usepackage("amssymb");
size(200);
pair K0=(1,0);
pair K1=rotate(360./9.)*K0;
pair K2=rotate(360./9.)*K1;
pair K3=rotate(360./9.)*K2;
pair K4=rotate(360./9.)*K3;
pair K5=rotate(360./9.)*K4;
pair K6=rotate(360./9.)*K5;
pair K7=rotate(360./9.)*K6;
pair K8=rotate(360./9.)*K7;
path KK = K0--K1--K2--K3--K4--K5--K6--K7--K8--cycle;
pair P1=D((1., 0.));
pair P2=D((0.3706844903100875, -0.87104878486755));
pair P3=D((-0.5991618200828668, -0.7478489484525013));
pair P4=D((-0.9396926207859084, 0.24639967283009717));
pair P5=D((-0.4058322193092865, 0.882629724233649));
pair P6=D((0.5640140910836678, 0.7594298878186004));
path PP = (0,0)--(2*P6)--(P1-P4)--(2*P2)--cycle;
add("p1",hatch(2mm,gray));
add("p2",tile(4.5mm,gray));
add("p3",crosshatch(3mm,gray));
add("p4",brick(3mm,gray));
add("p5",hatch(2mm,gray+dashed));
add("p6",crosshatch(2mm,gray+dashed));
add("p7",hatch(1mm,gray+dotted));
filldraw(PP,pattern("p1"));
filldraw(shift(P4-P1)*PP,pattern("p2"));
filldraw(shift(2*P2)*scale(-1)*PP,pattern("p3"));
filldraw(shift(2*P6)*scale(-1)*PP,pattern("p4"));
filldraw(shift(P1-P4)*shift(2*P2)*scale(-1)*PP,pattern("p5"));
filldraw(shift(P1-P4)*shift(2*P6)*scale(-1)*PP,pattern("p6"));
filldraw(shift(P4-P1)*shift(2*P2)*scale(-1)*PP,pattern("p7"));
filldraw(shift(P4-P1)*shift(2*P6)*scale(-1)*PP,lightgray);
D(KK);
D(shift(P1-P4)*KK);
D(shift(P4-P1)*KK);
D(shift(2*P2)*(scale(-1)*KK));
D(shift(2*P3)*(scale(-1)*KK));
D(shift(2*P5)*(scale(-1)*KK));
D(shift(2*P6)*(scale(-1)*KK));
\end{asy}
\end{center}
\caption{Honeycomb construction for the regular 9-gon. Each honeycomb cell (hatched parallelogram) is composed of
two triangles. The area minimization problem for each cell involves only the four 9-gons centered
at the cell vertices.}
\end{figure}

For every convex body $K$ and double lattice $\Xi$ we now have a concrete optimization problem
to solve: we wish to minimize the area of the quadrilateral $\xi'_0(0)\xi'_6(0)\xi'_1(0)\xi'_2(0)$
subject to the constraints that $\xi'_i(K)$ and $\xi'_j(K)$ do not overlap. Since the objective
and the constraints are invariant under common isometry, we may fix $\xi'_i=\xi_i$ for
one $i$. We parametrize
$\xi_i' = \mathrm{Tran}_{\mathbf{r}_i}\xi_i \mathrm{Rot}_{\theta_i}$, where $\mathrm{Rot}_\theta$ is a rotation
by $\theta$ about the origin. Since we are only
interested in certifying that the initial configuration is a local minimum, we can replace
the constraints with ones that are equivalent in a neighborhood.

\begin{lemma}\label{lemma:edgedg}
    Let $K$ and $K'$ be two polygons that intersect at a segment, which is not
    identical with a full edge of $K$ or of $K'$. The endpoints of the segments
    are $\mathbf{x}$ a vertex of $K$ and $\mathbf{y}$ a vertex of $K'$. Let $\mathbf{y}\mathbf{y}'$
    and $\mathbf{x}\mathbf{x}'$ be the edges of $K$ and $K'$ containing the intersection.
    Let $\mathbf{x}'\mathbf{y}\mathbf{x}\mathbf{y}'$ be oriented counterclockwise
    from the point of view of the interior of $K$ (otherwise switch $K$ and $K'$).
    There is some $\epsilon>0$ such that whenever $N(\xi),N(\xi')<\epsilon$,
    then $\xi(K)$ and $\xi'(K')$ have disjoint interiors if and only if
    $\alpha(\xi(\mathbf{x})\xi(\mathbf{x}')\xi'(\mathbf{y}))\ge0$
    and $\alpha(\xi'(\mathbf{y}')\xi'(\mathbf{y})\xi(\mathbf{x}))\ge0$,
    where $\alpha$ is the signed area of the oriented triangle.
\end{lemma}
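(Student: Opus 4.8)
\emph{Overall approach.} The interiors of the two closed convex bodies fail to be disjoint exactly when $\xi(K)\cap\xi'(K')$ has nonempty interior, and I would prove the stated equivalence by localizing the whole question to a small neighbourhood of the contact segment $\mathbf{x}\mathbf{y}$, where the geometry collapses to a comparison of two nearly parallel edge-lines. The necessity of the two inequalities is immediate. If $\alpha(\xi(\mathbf{x})\xi(\mathbf{x}')\xi'(\mathbf{y}))<0$ then $\xi'(\mathbf{y})$ lies strictly on the interior side of the line through the edge $\xi(\mathbf{x})\xi(\mathbf{x}')$ of $\xi(K)$; since $\mathbf{y}$ lies in the relative interior of that edge of $K$ it is a positive distance from the lines spanned by all the remaining edges of $K$, so for $\epsilon$ small $\xi'(\mathbf{y})$ is strictly on the interior side of every edge-line of $\xi(K)$, whence $\xi'(\mathbf{y})\in\mathrm{int}\,\xi(K)$. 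As $\xi'(\mathbf{y})$ is a vertex of $\xi'(K')$, every neighbourhood of it meets $\mathrm{int}\,\xi'(K')$, so the two interiors intersect; interchanging the roles of $K$ and $K'$ (and of $\mathbf{x}$ and $\mathbf{y}$) does the same for the second inequality.

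\emph{Localization.} For the converse, $K$ and $K'$ have disjoint interiors and their boundaries meet precisely along $\mathbf{x}\mathbf{y}$, so for any open neighbourhood $U$ of $\mathbf{x}\mathbf{y}$ there is $\epsilon>0$ such that $N(\xi),N(\xi')<\epsilon$ forces $\xi(K)\cap\xi'(K')\subseteq U$: away from $U$ the two bodies are separated by a positive distance that a small isometry cannot bridge. I would choose $U$ small enough that inside it $\partial K$ consists only of part of the edge $e=\mathbf{x}\mathbf{x}'$ together with the second edge $e_1$ of $K$ at the vertex $\mathbf{x}$, that $\partial K'$ consists only of part of $e'=\mathbf{y}\mathbf{y}'$ together with the second edge $e_1'$ of $K'$ at $\mathbf{y}$, and that $\mathbf{y}$ lies in the relative interior of $e$ and $\mathbf{x}$ in the relative interior of $e'$. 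Then in $U$ the body $\xi(K)$ is cut out by the two closed half-planes bounded by $\xi(e)$ and $\xi(e_1)$, and $\xi'(K')$ by the two bounded by $\xi'(e')$ and $\xi'(e_1')$, so $\mathrm{int}\,\xi(K)\cap\mathrm{int}\,\xi'(K')\cap U$ is contained in the intersection of the four corresponding open half-planes.

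\emph{The half-plane comparison.} Place the affine line through $\mathbf{x}\mathbf{y}$ on the $x$-axis, oriented so that $K$ lies locally above it and $K'$ below — this is exactly what the hypothesis on the cyclic order $\mathbf{x}'\mathbf{y}\mathbf{x}\mathbf{y}'$ provides. The lines $\xi(e)$ and $\xi'(e')$ are graphs $y=L(x)$ and $y=L'(x)$ of affine functions $O(\epsilon)$-close to $0$, and the open half-planes ``above $\xi(e)$'' and ``below $\xi'(e')$'' overlap only over $\{ L(x)<y<L'(x) \}$. The two inequalities say precisely that $L-L'\ge 0$ at the abscissa of $\xi'(\mathbf{y})$ and at the abscissa of $\xi(\mathbf{x})$, the two endpoints of the perturbed contact interval; by affineness $L\ge L'$ throughout that interval, so the overlap region is empty over it (in particular it is empty everywhere when $\xi(e)\parallel\xi'(e')$, and we are done). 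It remains to exclude overlap just outside the interval — say to the right of $\xi(\mathbf{x})$ — where I would use that $\xi(K)$ lies on the interior side of $\xi(e_1)$, a line through $\xi(\mathbf{x})$ meeting the $x$-axis at the fixed positive exterior angle of $K$ at $\mathbf{x}$, so that near $\xi(\mathbf{x})$ the body $\xi(K)$ cannot reach the vanishing sliver between $\xi(e)$ and $\xi'(e')$; the region just to the left of $\xi'(\mathbf{y})$ is symmetric, using $e_1'$. Taking $\epsilon$ below all the finitely many thresholds that appear above then yields the lemma.

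\emph{Main obstacle.} The delicate step is this last one: near each apex vertex both the overlap sliver of the two edge-lines and the wedge cut off by the adjacent edge degenerate to a point, so one must make quantitative, in terms of the exterior angle there — which is strictly positive precisely because $\mathbf{x}\mathbf{y}$ is not a full edge — the statement that the adjacent edge turns genuinely into the interior of its polygon and so keeps that polygon away from the sliver. Everything else is routine bookkeeping with signed areas and with the moduli of continuity of the isometries involved.
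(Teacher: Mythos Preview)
The paper states this lemma without proof, so there is nothing to compare against; your sketch is a correct way to prove it, and nothing essential is missing.

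A few small remarks. Your localization step (``away from $U$ the two bodies are separated by a positive distance'') is exactly right but, as you implicitly realize, one has to run the compactness argument twice (once for $K\setminus U$ against $K'$ and once for $K$ against $K'\setminus U$) and then shrink $\epsilon$ so that the resulting containment lands in a fixed neighbourhood rather than in $\xi(U)\cap\xi'(U)$. For the ``delicate step'' near the apex $\xi(\mathbf{x})$, it may help to make the comparison fully explicit: at horizontal offset $s>0$ from $\xi(\mathbf{x})$, the top of the sliver sits at height at most $L'(\xi(\mathbf{x})_1+s)\le \xi(\mathbf{x})_2+s\cdot(\text{slope of }L')$, while the boundary line $\xi(e_1)$ sits at height $\xi(\mathbf{x})_2+s\tan\theta+O(\epsilon s)$, where $\theta\in(0,\pi)$ is the fixed angle $e_1$ makes with the $x$-axis. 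Since the slope of $L'$ is $O(\epsilon)$ and $\tan\theta$ is bounded away from $0$ (or $e_1$ points into the left half-plane, in which case $\xi(K)$ has no points at all to the right of $\xi(\mathbf{x})$), the sliver lies strictly on the non-$K$ side of $\xi(e_1)$ for all $s>0$ once $\epsilon$ is small. The case $\theta\ge\pi/2$ deserves its own sentence because there $\xi(K)\cap U$ is simply empty to the right of $\xi(\mathbf{x})$, and the slope comparison is vacuous. With those points spelled out, the proof is complete.
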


\begin{lemma}\label{lemma:edgver}
    Let $K$ and $K'$ be two polygons that intersect at a point and not at a segment.
    The intersection point $\mathbf y$ is a vertex of one polygon, which we let be $K'$,
    and sits in the relative interior of the segment $\mathbf{x}'\mathbf{x}$, which in turn
    is contained in an edge of $K$. We assume the segment $\mathbf{x}'\mathbf{x}$ is
    oriented counterclockwise from the point of view of the interior of $K$.
    There is some $\epsilon>0$ such that whenever $N(\xi),N(\xi')<\epsilon$,
    then $\xi(K)$ and $\xi'(K')$ have disjoint interiors if and only if
    $\alpha(\xi(\mathbf{x})\xi(\mathbf{x}')\xi'(\mathbf{y}))\ge0$.
\end{lemma}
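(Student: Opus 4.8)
The plan is to reduce the global disjointness of $\xi(K)$ and $\xi'(K')$ to a local question in a small ball around the contact point, where $\xi(K)$ looks like a half-plane and $\xi'(K')$ looks like a convex wedge with apex $\xi'(\mathbf{y})$; the single sidedness condition in the statement then reads exactly as ``the apex has not crossed the edge of $K$''. The structure parallels that of \ref{lemma:edgedg}, \emph{mutatis mutandis}, but with a half-plane-plus-wedge local picture instead of two half-planes meeting along a segment.

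\emph{Localization.} When $\xi=\xi'=\mathrm{Id}$ the polygons touch only at $\mathbf{y}$, so for every $\rho>0$ the distance $\mathrm{dist}\bigl(K\setminus B(\mathbf{y},\rho),\,K'\bigr)$ is a positive number, say $3\delta(\rho)$. Since $K$ and $K'$ are bounded, $\sup_{z\in K}\|\xi(z)-z\|\le C\,N(\xi)$ for a constant $C$ depending only on $K$ (and similarly for $K'$), using the identification of $E(n)$ inside $SL_{n+1}(\mathbb{R})$ together with \ref{eq:norm-ineq}. Hence, once $N(\xi),N(\xi')<\epsilon$ with $C\epsilon<\delta(\rho)$, the set $\xi(K)\cap\xi'(K')$ --- if nonempty --- is contained in $B(\mathbf{y},\rho+C\epsilon)\subseteq B(\mathbf{a},\rho+2C\epsilon)$, where $\mathbf{a}:=\xi'(\mathbf{y})$. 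So it suffices to analyze the interiors inside $V:=B(\mathbf{a},\rho+2C\epsilon)$.

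\emph{Local shapes and case analysis.} Choose $\rho$ smaller than half the distance from $\mathbf{y}$ to the endpoints $\mathbf{x},\mathbf{x}'$, to every edge of $K$ not containing $\mathbf{y}$, and to every edge of $K'$ not containing $\mathbf{y}$. Then a neighborhood of $\mathbf{y}$ in $K$ coincides with the half-plane $H$ bounded by the line $\mathbf{x}'\mathbf{x}$ on the interior side of $K$, and a neighborhood of $\mathbf{y}$ in $K'$ coincides with a wedge $W$ of apex $\mathbf{y}$ and opening angle in $(0,\pi)$, namely the interior angle of the convex polygon $K'$ at its vertex $\mathbf{y}$. For $\epsilon$ small these descriptions persist after applying the isometries: $\xi(K)\cap V$ is the half-plane $\widetilde H$ bounded by the line through $\xi(\mathbf{x})$ and $\xi(\mathbf{x}')$, and $\xi'(K')\cap V$ is a wedge $\widetilde W$ with apex $\mathbf{a}$ whose bounding rays are within $O(\epsilon)$ of those of $W$. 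The hypothesis that $K$ and $K'$ meet only at a point, not a segment, says $W\cap H=\{\mathbf{y}\}$; since $W$ has opening $<\pi$, this forces both bounding rays of $W$ at $\mathbf{y}$ to point strictly into the open complement $\mathbb{R}^2\setminus\overline{H}$ --- an open condition, hence one that survives the perturbation, so the bounding rays of $\widetilde W$ still point strictly into $\mathbb{R}^2\setminus\overline{\widetilde H}$ once $\epsilon$ is small. With the orientation convention in the statement, $\alpha(\xi(\mathbf{x})\,\xi(\mathbf{x}')\,\xi'(\mathbf{y}))\ge0$ means precisely $\mathbf{a}\notin\mathrm{int}\,\widetilde H$; combined with the rays pointing away, this gives $\widetilde W\cap\overline{\widetilde H}\subseteq\{\mathbf{a}\}$, so $\mathrm{int}\,\xi(K)\cap\mathrm{int}\,\xi'(K')\cap V=\emptyset$, hence (by localization) the interiors are globally disjoint. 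Conversely, if $\alpha(\xi(\mathbf{x})\,\xi(\mathbf{x}')\,\xi'(\mathbf{y}))<0$, then $\mathbf{a}\in\mathrm{int}\,\widetilde H=\mathrm{int}(\xi(K)\cap V)$ while a small sub-wedge of $\widetilde W$ at $\mathbf{a}$ lies in $\mathrm{int}\,\xi'(K')$, so the interiors overlap.

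I expect the main obstacle to be the uniform bookkeeping in the first two steps: choosing $\rho$, the ball $V$, and $\epsilon$ so that all the ``looks like a half-plane/wedge'' statements hold \emph{exactly} throughout $V$, and so that the open conditions on the ray directions and on the sign of $\alpha$ are simultaneously in force. This rests on $\mathbf{y}$ being at positive distance from $\mathbf{x}$, $\mathbf{x}'$ and from all other edges of $K$ and $K'$, and on the opening angle of $K'$ at $\mathbf{y}$ lying strictly in $(0,\pi)$; the geometry of each case is otherwise elementary.
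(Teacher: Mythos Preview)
The paper states \ref{lemma:edgver} (and likewise \ref{lemma:edgedg}) without proof, treating it as an elementary geometric fact; your argument correctly supplies the omitted details. The localization step, the half-plane/wedge local model, and the dichotomy on the sign of $\alpha$ are all sound, and the orientation convention is read correctly so that $\alpha(\xi(\mathbf{x})\,\xi(\mathbf{x}')\,\xi'(\mathbf{y}))\ge0$ indeed means $\mathbf{a}\notin\mathrm{int}\,\widetilde H$. The only place a reader might want one more sentence is the implication ``$W\cap H=\{\mathbf{y}\}$ and opening $<\pi$ $\Rightarrow$ both rays point strictly into $\mathbb{R}^2\setminus\overline H$, hence $W\setminus\{\mathbf{y}\}\subset\mathbb{R}^2\setminus\overline H$'': it is worth noting explicitly that every direction in a convex wedge is a nonnegative combination of the two bounding directions, so if both have negative inner product with the inward normal of $H$ then so does every direction in the wedge.
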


Note that two cases are not treated: the case of an intersection at a point that is a vertex of both polygons
and the case of an intersection at a full edge of one or both polygons.
In the optimal double-lattice packings of the first two bodies we treat, the regular pentagon
and the regular heptagon, there are no such intersections. For the application
to more general convex polygons, we have already shown that the first intersection
case only arises in exceptional cases. The constraint arising in the second case
can be relaxed to a constraint of the form of \ref{lemma:edgedg} without affecting
the solution.

We will show that optimization problems that arise fall into a convenient
form, where linear stability holds along all but one direction. Along
the direction of vanishing linear stability, the theory of
Kuperberg and Kuperberg will be shown to guarantee stability.

Consider the nonlinear optimization problem
\begin{equation}\label{eq:opt-nonlin}
    \begin{aligned}
\text{minimize } & f(x)\text,\\
\text{over } & x\in \mathbb{R}^n\text, \\
\text{subject to } & g_r(x)\ge 0, r\in I\text,\\
& \|x\|\le\epsilon\text.
    \end{aligned}
\end{equation}
We will prove that the following conditions are sufficient for the origin to be the unique solution of \ref{eq:opt-nonlin}
for some $\epsilon>0$.
Let $e_1$ denote the standard unit vector $(1, 0,\dots,0)\in\mathbb{R}^n$, $E=\{te_1\}$
its span, and $H$ the orthogonal complement, so that $\mathbb{R}^n = E\oplus H$.

\begin{conditions}\label{cnd:assumpt}
\quad
\begin{enumerate}
\item \label{itm:finite} $I$ is a finite set.
\item \label{itm:contdiff} $f(x)$ and $g_r(x)$, $r\in I$, are continuously differentiable. Denote their
    derivatives $F(t) = \nabla f (te_1)$ and $G_r(t) = \nabla g_r(te_1)$.
\item \label{itm:origin} $f(0) = g_r(0) = 0$ for all $r$ in $I$.
\item \label{itm:lp} The linear program
    \begin{equation}\label{eq:lp}
	\text{minimize }_{x\in \mathbb{R}^n}F(0)\cdot x \textrm{ subject to }G_r(0)\cdot x \ge 0, r\in I
    \end{equation}
has $E$ as the set of solutions.
\item \label{itm:zero} There is an $\epsilon > 0$ so the functions $g_r(te_1) = 0$ for all $-\epsilon<t<\epsilon$, $r\in I$.
\item \label{itm:min} $f(te_1)$ has an isolated local minimum at $t=0$.
\end{enumerate}
\end{conditions}

\begin{theorem}\label{thm:slice}
    Given Conditions \ref{cnd:assumpt}, there exists $\epsilon>0$ such that the origin is the unique solution of \ref{eq:opt-nonlin}.
\end{theorem}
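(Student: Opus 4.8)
The plan is to split $\mathbb{R}^n=E\oplus H$ and to show two complementary things: along $E$ the objective is already controlled by Condition \ref{itm:min}, while in every direction transverse to $E$ the objective grows at a definite linear rate, inherited from the strict optimality of the linear program in Condition \ref{itm:lp}. Throughout, I write a candidate feasible point as $x=te_1+h$ with $h\in H$; since $e_1\perp h$ we have $|t|\le\|x\|$ and $\|h\|\le\|x\|$.

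First I would extract the linearized picture. Because $E$ lies in the feasible cone $C=\{x:G_r(0)\cdot x\ge0\ \forall r\}$ of the linear program, both $e_1$ and $-e_1$ are feasible, so $G_r(0)\cdot e_1=0$ for every $r$; and since $E$ is the full solution set, the optimal value is $0$ and $F(0)\cdot e_1=0$ as well (consistent with $f$ being $C^1$ with an interior minimum along $E$ by Condition \ref{itm:min}). Hence the linear program restricted to $H$ has $\{0\}$ as its unique solution, and in fact $F(0)\cdot h>0$ for every $h\in C\cap H\setminus\{0\}$, for otherwise the ray through such an $h$ would be an optimal solution of \ref{eq:lp} lying outside $E$. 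I then upgrade this to a \emph{robust, scale-invariant} estimate: there exist $c>0$ and $\delta>0$ such that every $h\in H$ with $G_r(0)\cdot h\ge-\delta\|h\|$ for all $r$ satisfies $F(0)\cdot h\ge c\|h\|$. This is a compactness argument: a violating sequence normalized to $\|h_k\|=1$ has, by finiteness of $I$ (Condition \ref{itm:finite}), a subsequential limit $h^\ast\in H$ with $\|h^\ast\|=1$, $G_r(0)\cdot h^\ast\ge0$, and $F(0)\cdot h^\ast\le0$, contradicting the strict inequality just established.

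Next I would transfer this to the nonlinear functions using the $C^1$ regularity of Condition \ref{itm:contdiff} and the normalization $f(0)=g_r(0)=0$ of Condition \ref{itm:origin}. Writing $g_r(te_1+h)-g_r(te_1)=\int_0^1\nabla g_r(te_1+sh)\cdot h\,ds$ and using $g_r(te_1)=0$ from Condition \ref{itm:zero} together with the (uniform) continuity of the finitely many gradients near the origin — note $\|te_1+sh\|\le2\|x\|$ — I obtain $g_r(x)=G_r(0)\cdot h+o(\|h\|)$ and likewise $f(te_1+h)-f(te_1)=F(0)\cdot h+o(\|h\|)$, with error bounds uniform in $t$ and in $r\in I$ once $\|x\|$ is small. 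Now choose $\epsilon$ small enough that (a) $f(te_1)>0$ for $0<|t|\le\epsilon$ by Condition \ref{itm:min}, (b) the constraint error term is $\le\delta\|h\|$, and (c) the objective error term is $\le(c/2)\|h\|$. For a feasible $x=te_1+h$ with $0<\|x\|\le\epsilon$: if $h=0$ then $f(x)=f(te_1)>0=f(0)$; if $h\neq0$ then feasibility gives $G_r(0)\cdot h\ge-\delta\|h\|$, so the robust estimate yields $F(0)\cdot h\ge c\|h\|$, whence, using $f(te_1)\ge0$, $f(x)\ge f(te_1)+F(0)\cdot h+o(\|h\|)\ge c\|h\|-\tfrac{c}{2}\|h\|>0=f(0)$. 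Thus the origin is the unique minimizer, which is the claim.

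The main obstacle is precisely the mismatch between ``$h$ lies in the feasible cone of the linearized program'' and ``$h$ is only feasible up to an $o(\|h\|)$ slack coming from the curvature of the true constraints''; handling it is exactly why the plain strict inequality $F(0)\cdot h>0$ on $C\cap H\setminus\{0\}$ must be strengthened to the buffered, homogeneous estimate with the explicit $\delta$, which is robust enough to absorb the nonlinear error. A secondary technical point is keeping all the little-$o$ estimates uniform over $t$ and over the constraints simultaneously, which is where Conditions \ref{itm:finite} and \ref{itm:contdiff} are used.
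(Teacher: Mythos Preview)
Your argument is correct, but it follows a genuinely different route from the paper's. The paper slices the problem at each fixed $t$ and uses LP duality: since the solution set of \ref{eq:lp} is exactly $E$, the gradient $F(0)$ lies in the interior (relative to $H$) of the cone generated by the $G_r(0)$; by continuity of the gradients, $F(t)$ still lies in the interior of the cone generated by the $G_r(t)$ for all small $|t|$, so standard first-order sufficient conditions for nonlinear programs give that $te_1$ minimizes the $t$-slice \ref{eq:opt-slice}, and then Condition~\ref{itm:min} finishes the job. In contrast, you work purely on the primal side: you extract a scale-invariant, \emph{buffered} estimate $F(0)\cdot h\ge c\|h\|$ on a $\delta$-neighborhood of the feasible cone via compactness, and then absorb the nonlinear errors from Taylor expansions around the $E$-axis into that buffer. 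Your approach is more self-contained (no appeal to duality or to cited optimality conditions) and makes the quantitative mechanism explicit; the paper's approach is shorter and more conceptual, trading the compactness step for a one-line continuity argument in the dual cone.
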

\begin{proof}
Consider the sliced problem with $t$ fixed,
\begin{equation}\label{eq:opt-slice}
    \begin{aligned}
	\text{minimize } & f(x)\text,\\
	\text{over } & x\in H+t e_1\text,\\
\text{subject to } & g_r(x)\ge 0, r\in I\\
& \|x\|\le\epsilon\text.
    \end{aligned}
\end{equation}
For $t=0$, the local minimum at $x=0$ is guaranteed by the LP \ref{eq:lp} \cite{bazaraa2006nonlinear}. The LP implies, by duality, that
$F(0)$ lies in the interior (relative to $H$) of the cone finitely generated by $G_r(0)$, $r\in I$. By continuity,
we have that $F(t)$ also lies in the interior of the cone generated by $G_r(t)$, $r\in I$, for all $|t|<\epsilon$, for
some $\epsilon>0$. Therefore, in each slice, the minimum of \ref{eq:opt-slice} occurs at $x=te_1$.
From the final condition of \ref{cnd:assumpt}, we conclude that the origin is the unique solution of \ref{eq:opt-nonlin}
for some $\epsilon>0$.
\end{proof}

\section{Calculations}

\subsection{Pentagons}

Let us fix a regular pentagon $K=\mathrm{conv} \{\mathbf{k}_i:i=0,\ldots 4\}$, where $\mathbf{k}_i = \mathrm{Rot}_{2\pi i/5} (1,0)$.
In this subsection, we do all the calculations in the extension field $\mathbb{Q}(u,v)$, where $u=\cos \pi/5$ and $v=\sin \pi/5$.

\begin{figure}
    \begin{center}
\begin{asy}
import cseblack;
import olympiad;
usepackage("amssymb");
size(200);
pair K0=D("\mathbf{k}_0,\mathbf{p}_1",(1,0),E);
pair K1=D("\mathbf{k}_1",rotate(360./5.)*K0,NE);
pair K2=D("\mathbf{k}_2",rotate(360./5.)*K1,NW);
pair K3=D("\mathbf{k}_3",rotate(360./5.)*K2,SW);
pair K4=D("\mathbf{k}_4",rotate(360./5.)*K3,SE);
path KK = D(K0--K1--K2--K3--K4--cycle,linewidth(1));

real u = cos(pi/5.);
pair P1=K0;
pair P2=D("\mathbf{p}_2",(K0+3.*K1)/4.,NE);
pair P3=D("\mathbf{p}_3",((3.-2.*u)*K1+(1.+2.*u)*K2)/4.,NW);
pair P4=D("\mathbf{p}_4",(2.*K2+2.*K3)/4.,SW);
pair P5=D("\mathbf{p}_5",((3.-2.*u)*K4+(1.+2.*u)*K3)/4.,SW);
pair P6=D("\mathbf{p}_6",(K0+3.*K4)/4.,SE);
path PP = P2--P3--P5--P6--cycle;
fill(PP,lightgray);
D(PP);
D(P1--P4);

pair V2 = 1.*(K1-P2);
pair V3 = (1./sqrt(5.))*(K2-P3);
pair V4 = 1.*(K3-P4);
pair V5 = (sqrt(5.)-2.)*(K4-P5);
pair V6 = (1./3.)*(K0-P6);
real t = -0.6;
path PP = D((P2+t*V2)--(P3+t*V3)--(P5+t*V5)--(P6+t*V6)--cycle,dashed);
D( P1--(P4+t*V4),dashed);

MC ("y",D((P4-(0.1,0))--(P4+t*V4-(0.1,0)),Arrow),0.5,W);
\end{asy}
\scalebox{.8}{
    \tikzsetnextfilename{pentplot}
    \begin{tikzpicture}
	\begin{axis}[
	ylabel style={rotate=-90},
		xlabel={$y$}, ylabel={$A$},
		xmin=-1., xmax=1.,
		ymin=1.25, ymax=1.4,
	    ]
	    \addplot[] {1.2903580504417251 + 0.10153740507278321*x^2};
	\end{axis}
    \end{tikzpicture}
}
\caption{\label{fig:pent}
Half-length parallelograms in the regular pentagon.}
\end{center}
\end{figure}
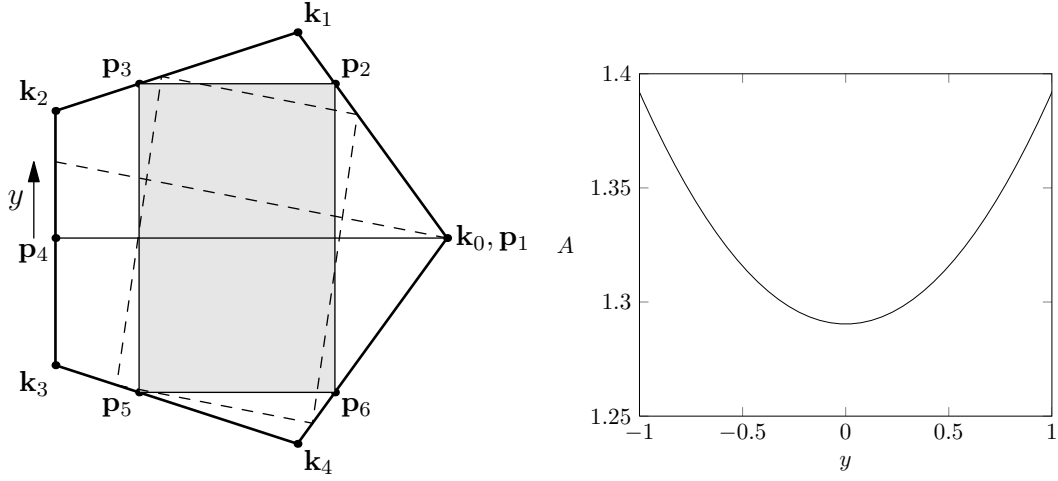

One minimum-area half-length parallelogram corresponds to the affine diameter $\mathbf{p}_1\mathbf{p}_4$, where
$\mathbf{p}_1 = \mathbf{k}_0$ and $\mathbf{p}_4 = \tfrac12 (\mathbf{k}_2+\mathbf{k}_3)$.
The vertices of the parallelogram are given by
$\mathbf{p}_2=\tfrac14\mathbf k_0+\tfrac34\mathbf k_1$,
$\mathbf{p}_3=\tfrac{3-2u}4\mathbf k_1+\tfrac{1+2u}4\mathbf k_2$,
$\mathbf{p}_5=\tfrac{1+2u}4\mathbf k_3+\tfrac{3-2u}4\mathbf k_4$, and
$\mathbf{p}_6=\tfrac34\mathbf k_4+\tfrac14\mathbf k_0$.
This half-length parallelogram is illustrated in \ref{fig:pent}.

The four pentagons that surround our primitive honeycomb cell are
$\xi_i(K)$, $i=0,1,2,6$, where $\xi_0=\mathrm{Id}$,
$\xi_1=\mathrm{Tran}_{\mathbf{p}_1-\mathbf{p}_4}$,
$\xi_2=\mathrm{Ref}_{\mathbf{p}_2}$,
and $\xi_6=\mathrm{Ref}_{\mathbf{p}_6}$.
We are interested in showing that the assignment $\xi_i'=\xi_i$, $i=0,1,2,6$, locally
minimizes the area of the quadrilateral $\xi_0(0)\xi_6(0)\xi_1(0)\xi_2(0)$, subject
to the nonoverlap constraints. As explained in the previous section, we may
fix $\xi'_1=\xi_1$ and replace the nonoverlap constraints by signed area constraints. We
obtain the following optimization problem:
\begin{equation}\label{eq:opt-pent}
    \begin{aligned}
	\text{minimize } & f(z)=\alpha(\xi'_0(0),\xi'_1(0),\xi'_2(0))+\alpha(\xi'_0(0),\xi'_6(0),\xi'_1(0) )\text, \\
	\text{subj.\ to } & g_1(z)=\alpha(\xi'_0(\mathbf{k}_1),\xi'_0(\mathbf{k}_0),\xi'_2(\mathbf{k}_1))\ge0\\
	                & g_2(z)=\alpha(\xi'_2(\mathbf{k}_1),\xi'_2(\mathbf{k}_0),\xi'_0(\mathbf{k}_1))\ge0\\
	                & g_3(z)=\alpha(\xi'_0(\mathbf{k}_0),\xi'_0(\mathbf{k}_4),\xi'_6(\mathbf{k}_4))\ge0\\
	                & g_4(z)=\alpha(\xi'_6(\mathbf{k}_0),\xi'_6(\mathbf{k}_4),\xi'_0(\mathbf{k}_4))\ge0\\
	                & g_5(z)=\alpha(\xi'_1(\mathbf{k}_2),\xi'_1(\mathbf{k}_1),\xi'_2(\mathbf{k}_2))\ge0\\
	                & g_6(z)=\alpha(\xi'_2(\mathbf{k}_2),\xi'_2(\mathbf{k}_1),\xi'_1(\mathbf{k}_2))\ge0\\
	                & g_7(z)=\alpha(\xi'_1(\mathbf{k}_4),\xi'_1(\mathbf{k}_3),\xi'_6(\mathbf{k}_3))\ge0\\
	                & g_8(z)=\alpha(\xi'_6(\mathbf{k}_4),\xi'_6(\mathbf{k}_3),\xi'_1(\mathbf{k}_3))\ge0\\
	                & g_9(z)=\alpha(\xi'_1(\mathbf{k}_3),\xi'_1(\mathbf{k}_2),\xi'_0(\mathbf{k}_0))\ge0\text,
    \end{aligned}
\end{equation}
where $\xi'_i=\mathrm{Tran}_{(x_i,y_i)}\xi_i\mathrm{Rot}_\theta$ for $i=0,2,6$, and $\xi'_1=\xi_1$. We adopt
a condensed notation for the free variables $z=(x_0,y_0,\theta_0,x_2,y_2,\theta_2,x_6,y_6,\theta_6)$.

We consider the linearization of \ref{eq:opt-pent} around the point $z=0\in\mathbb{R}^9$. This gives a
problem of the form
\begin{equation}\label{eq:lin-pent}
    \text{minimize } c\cdot z \text{ subject to } G z\ge 0\text,
\end{equation}
where $c\in \mathbb R^9$, $G\in\mathbb{R}^{9\times9}$ and we use the line programming notation $\ge0$
to denote a vector lying in the closed positive orthant. The exact numeric values of $G$ and $c$ are given in
\ref{tab:pentGc}.
We can show by direct calculation a vector $\eta>0$ lying in the open positive orthant exists such that
$c=\eta^T G$. Such a vector is given in \ref{tab:pentGc}.
By the fundamental theorem of linear algebra, this observation implies that $G z\ge 0$ and $c\cdot z\le0$
if and only if $G z = 0$ and $c\cdot z=0$, and so the program
\ref{eq:lin-pent} is minimized exactly by the null space of $G$ and is suboptimal elsewhere in the
cone $Gz\ge0$. We calculate the rank of $G$ to be $8$, and so the null space is one-dimensional,
and it is generated by the vector $z_0$ given in \ref{tab:pentGc}.
The null space corresponds precisely to the rearrangement given by evolving
the half-length parallelogram according to the locally linear parameterization
discussed in \ref{sec:doublatt}. Importantly, this motion involves no rotations. We can verify directly
that $f(tz_0)$ is a quadratic function of $t$ minimized at $t=0$, and that
$g_r(tz_0)=0$ identically for $r=1,\ldots,9$. Indeed, perturbing the half-length
parallelogram away from the minimum-area one increases the area of the resulting cell
and maintains all the contacts.

\begin{table}
    \begin{equation*}
	\begin{aligned}
	    G &= \left(\begin{array}{ccccccccc}-2 u v & -\tfrac32 + u & 0 & 2 u v & \tfrac32 - u & \tfrac32 - u & 0 & 0 & 
      0 \\ -2 u v & -\tfrac32 + u & \tfrac32 - u & 2 u v & \tfrac32 - u & 0 & 0 & 0 & 
      0 \\ -2 u v & \tfrac32 - u & 0 & 0 & 0 & 0 & 
      2 u v & -\tfrac32 + u & -\tfrac32 + u \\ -2 u v & \tfrac32 - u & -\tfrac32 + u & 0 & 0 & 
      0 & 2 u v & -\tfrac32 + u & 0 \\ 0 & 0 & 0 & v - 2 u v & -\tfrac12 + 2 u & 
      \tfrac32 - u & 0 & 0 & 0 \\ 0 & 0 & 0 & v - 2 u v & -\tfrac12 + 2 u & -\tfrac72 + 4 u & 
      0 & 0 & 0 \\ 0 & 0 & 0 & 0 & 0 & 0 & v - 2 u v & \tfrac12 - 2 u & -\tfrac32 + u \\ 0 & 
      0 & 0 & 0 & 0 & 0 & v - 2 u v & \tfrac12 - 2 u & \tfrac72 - 4 u \\ -2 v & 0 & 0 & 0 & 0 & 
      0 & 0 & 0 & 0\end{array}\right)\\
      \eta^T &= \left(\begin{array}{ccccccccc}\tfrac14, & \tfrac14, & \tfrac14, & \tfrac14, & \tfrac1{10} + \tfrac{u}{10}, & \tfrac25 + \tfrac9{10}u, &
      \tfrac1{10} + \tfrac{u}{10}, \tfrac25 + \tfrac9{10}u, 2 u \end{array}\right)\\
	    c^T &= \left(\begin{array}{ccccccccc}-6 u v, & 0, & 0, & 0, & 1 + u, & 0, & 0, & -1 - u, & 0\end{array}\right)\\
	    z_0^T &= \left(\begin{array}{ccccccccc}0, & 2 + 4 u, & 0, & 2 v + 4 u v, & 1, & 0, & -2 v - 4 u v, & 1, & 0\end{array}\right)
	\end{aligned}
  \end{equation*}
  \caption{\label{tab:pentGc}Constraint Jacobian and objective gradient for the pentagon
  packing problem.}
\end{table}

Therefore, \ref{eq:opt-pent}, after a change in coordinates, satisfies all the conditions of \ref{thm:slice}, and following
directly from \ref{thm:noaux} we have:

\begin{theorem}
    The optimal double-lattice packing of regular pentagons, illustrated in \ref{fig:pent}, is strongly extreme.
\end{theorem}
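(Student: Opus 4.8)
The plan is to apply \ref{thm:noaux} via the honeycomb construction of \ref{sec:doublatt}. Since every cell of the double-lattice honeycomb is carried to $\mathcal P_{\xi_0}$ by an isometry in $\Xi$, and since \ref{eq:opt-cell} is invariant under a common isometry (so one of the four isometries may be frozen, $\xi_1'=\xi_1$), it is enough to show that the restriction of the identity solves the single nine-variable problem \ref{eq:opt-pent} in $z=(x_0,y_0,\theta_0,x_2,y_2,\theta_2,x_6,y_6,\theta_6)$. The first step is to justify the form of \ref{eq:opt-pent}: from \ref{fig:pent} one checks that the only contacts among the four pentagons $\xi_i(K)$, $i=0,1,2,6$, are of edge-vertex type---no two meet along a full edge or at a common vertex---so \ref{lemma:edgedg} and \ref{lemma:edgver} permit each nonoverlap constraint to be traded, in a neighborhood of the base configuration, for a signed-area inequality $g_r(z)\ge 0$. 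Enumerating the touching pairs and the edges they touch along produces the nine constraints $g_1,\dots,g_9$.

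The second step is the linear analysis. Linearizing $f$ and the $g_r$ at $z=0$ gives the vector $c$ and the matrix $G$ of \ref{tab:pentGc}; all entries lie in $\mathbb Q(u,v)$, so this is an exact computation. The crux is to exhibit a strictly positive $\eta$ with $c^{\top}=\eta^{\top}G$: by the fundamental theorem of linear algebra this forces $\{z:Gz\ge 0,\ c\cdot z\le 0\}=\ker G$, so the linear program \ref{eq:lin-pent} attains its minimum exactly on $\ker G$ and is strictly suboptimal elsewhere in the feasibility cone. Computing $\operatorname{rank}G=8$ shows $\ker G$ is the line spanned by the explicit $z_0$ of \ref{tab:pentGc}, which is recognizable as the rotation-free tangent to the piecewise-linear parametrization of half-length parallelograms from \ref{sec:doublatt}; consequently $g_r(tz_0)=0$ identically for all $r$, and $f(tz_0)$ is the area of the evolving honeycomb cell.

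The third step assembles Conditions \ref{cnd:assumpt}. Items \ref{itm:finite}--\ref{itm:origin} are immediate (nine smooth constraints vanishing at the base point); item \ref{itm:lp} is the computation above, after an orthogonal change of coordinates carrying $z_0$ to $e_1$, so that $E=\ker G$ is the solution set of the linear program; item \ref{itm:zero} is the identity $g_r(tz_0)\equiv 0$; and item \ref{itm:min} holds because $f(tz_0)$ is a quadratic in $t$ with strictly positive leading coefficient, which is the content of \ref{thmkk} together with the fact (\ref{fig:pent}) that the chosen half-length parallelogram is the \emph{isolated} area minimizer for the regular pentagon. \ref{thm:slice} then gives that $z=0$ is the unique solution of \ref{eq:opt-pent} for some $\epsilon>0$, so the restriction of the identity solves every cell problem, and \ref{thm:noaux} yields strong extremality.

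I expect the bulk of the work, and the only real obstacle, to be the exact linear-algebra step: writing down $G$ and $c$ from the contact geometry without error and certifying both that a strictly positive $\eta$ satisfies $c^{\top}=\eta^{\top}G$ and that $\operatorname{rank}G=8$. This is conceptually the heart of the matter: it says the linearized cell problem is degenerate along exactly one direction---the double-lattice evolution direction---where linear stability cannot hold and one must instead borrow the second-order rigidity supplied by the area minimization of Kuperberg and Kuperberg, whereas along every transverse direction the positive dual vector $\eta$ certifies that no first-order decrease of the cell area is possible. A secondary point, already dispatched in the first step, is the verification that the signed-area constraints are genuinely equivalent to nonoverlap near the base configuration, which relies on every contact being of the edge-vertex type handled by \ref{lemma:edgedg} and \ref{lemma:edgver}.
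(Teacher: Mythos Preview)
Your proposal is correct and follows essentially the same route as the paper: reduce to the single cell problem \ref{eq:opt-pent}, linearize to obtain $G$ and $c$, exhibit a strictly positive dual vector $\eta$ with $c=\eta^{\top}G$ so that the linear program is minimized exactly on the one-dimensional kernel of $G$, identify that kernel with the half-length-parallelogram evolution direction, and then invoke \ref{thm:slice} and \ref{thm:noaux}. One small wording point: the contacts at $\mathbf p_2,\mathbf p_3,\mathbf p_5,\mathbf p_6$ are segment contacts handled by \ref{lemma:edgedg}, while only the contact at $\mathbf p_1$ is of the vertex-in-edge type handled by \ref{lemma:edgver}; your phrase ``edge-vertex type'' slightly obscures this, but since you cite both lemmas the argument goes through.
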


\subsection{Heptagons}

The calculation for the regular heptagon starts out in the same manner as the calculation presented above for regular pentagons.
However, it will turn out that the linear program equivalent to \ref{eq:lin-pent} is in this case not minimized
at $z=0$, and so we will need to add an auxiliary objective to the area as allowed for in \ref{thm:aux}.

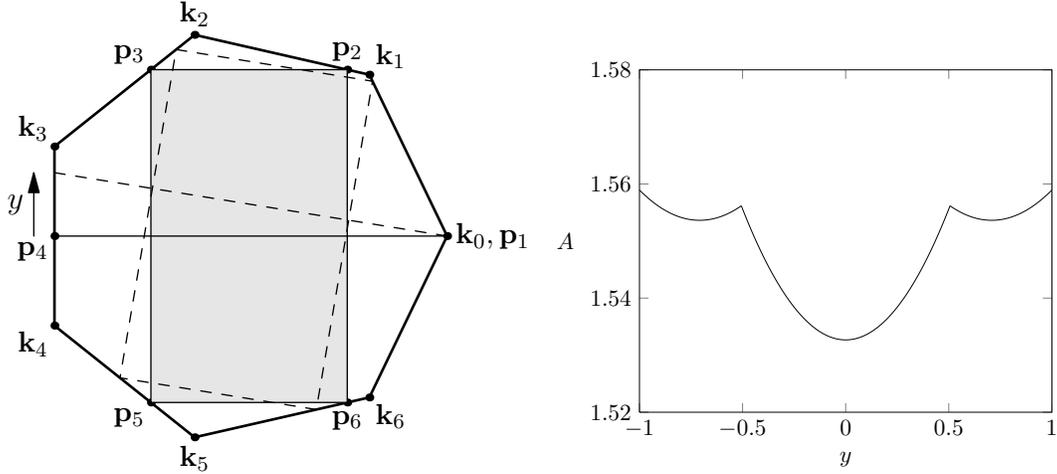
\begin{figure}
\begin{center}
\begin{asy}
import cseblack;
import olympiad;
usepackage("amssymb");
size(200);
pair K0=D("\mathbf{k}_0,\mathbf{p}_1",(1,0),E);
pair K1=D("\mathbf{k}_1",rotate(360./7.)*K0,NE);
pair K2=D("\mathbf{k}_2",rotate(360./7.)*K1,N);
pair K3=D("\mathbf{k}_3",rotate(360./7.)*K2,NW);
pair K4=D("\mathbf{k}_4",rotate(360./7.)*K3,SW);
pair K5=D("\mathbf{k}_5",rotate(360./7.)*K4,S);
pair K6=D("\mathbf{k}_6",rotate(360./7.)*K5,SE);
path KK = D(K0--K1--K2--K3--K4--K5--K6--cycle,linewidth(1));

pair P1=(1., 0.);
pair P2=D("\mathbf{p}_2",(0.516460816041133, 0.806260150051493),N);
pair P3=D("\mathbf{p}_3",(-0.4340236179100764, 0.8062601500514931),NW);
pair P4=D("\mathbf{p}_4",(-0.9009688679024191, 0.),SW);
pair P5=D("\mathbf{p}_5",(-0.4340236179100764, -0.8062601500514931),SW);
pair P6=D("\mathbf{p}_6",(0.516460816041133, -0.806260150051493),S);
path PP = P2--P3--P5--P6--cycle;
fill(PP,lightgray);
D(PP);
D(P1--P4);
pair P4x = P4;

pair P1=(1., 0.);
pair P2=(0.36670303301673796, -0.8404413867659472);
pair P3=(-0.5837814009344715, -0.686832303311513);
pair P4=(-0.9009688679024191, 0.3072181669088686);
pair P5=(-0.31174490092936674, 0.9037741728702919);
pair P6=(0.6387395330218427, 0.7501650894158576);
path PP = P2--P3--P5--P6--cycle;
D(PP,dashed);
D(P1--P4,dashed);

MC ("y",D((P4x-(0.1,0))--(P4-(0.1,0)),Arrow),0.5,W);
\end{asy}
\scalebox{.8}{
    \tikzsetnextfilename{pentplot}
    \begin{tikzpicture}
	\begin{axis}[
	ylabel style={rotate=-90},
		xlabel={$y$}, ylabel={$A$},
		xmin=-1., xmax=1.,
		ymin=1.52, ymax=1.58,
	    ]
	    \addplot[domain=-0.506040792565066:0.506040792565066] {1.5326754446782211 + 0.09176757534725741*x^2};
	    \addplot[domain=0.506040792565066:1] {1.5848482175212668 - 0.08816765628922468*x + 0.06225952189759027*x^2};
	    \addplot[domain=-1:-0.506040792565066] {1.5848482175212668 + 0.08816765628922468*x + 0.06225952189759027*x^2};
	\end{axis}
    \end{tikzpicture}
}
\caption{\label{fig:hept}
Half-length parallelograms in the regular heptagon. The dashed configuration is the local minimum at $y>0$.}
\end{center}
\end{figure}

\begin{table}[t]
    \begin{equation*}
	\begin{aligned}
	    G &= \left(\begin{array}{ccccc}v + 2 u v - 4 u^2 v & \tfrac32 + u - 4 u^2 & 
 11 + 2 u - 16 u^2 & -v - 2 u v + 4 u^2 v & -\tfrac32 - u + 4 u^2 \\ v + 2 u v - 4 u^2 v & 
 \tfrac32 + u - 4 u^2 & -2 + 2 u^2 & -v - 2 u v + 4 u^2 v & -\tfrac32 - u + 
  4 u^2 \\ v + 2 u v - 4 u^2 v & -\tfrac32 - 
  u + 4 u^2 & -11 - 2 u + 16 u^2 & 0 & 0 \\ v + 2 u v - 4 u^2 v & -\tfrac32 - u + 4 u^2 & 
 2 - 2 u^2 & 0 & 0 \\ 0 & 0 & 0 & 2 v - 4 u^2 v & 
 \tfrac12 + 2 u - 2 u^2 \\ 0 & 0 & 0 & 2 v - 4 u^2 v & 
 \tfrac12 + 2 u - 2 u^2 \\ 0 & 0 & 0 & 0 & 0 \\ 0 & 0 & 0 & 0 & 0 \\ -2 v & 0 & 0 & 0 & 0 \end{array}\right.\\
      \eta^T &= \left(\begin{array}{ccccc}\tfrac12, & -2 + 4 u^2, & \tfrac12, & -2 + 
 4 u^2, & \tfrac1{71} (70 + 45 u - 66 u^2), \end{array}\right.\\
	    c^T &= \left(\begin{array}{ccccccccc}7 v + 2 u v - 20 u^2 v, & 0, & 0, & 0, & 1 + u, & 0, & 0, & -1 - u, & 0\end{array}\right)\\
	    z_0^T &= \left(\begin{array}{ccccccccc}0, & 2 - 8 u + 8 u^2, & 0, & -4 u v + 8 u^2 v, & 1, & 0, & 4 u v - 8 u^2 v, & 1, & 0\end{array}\right)
	\end{aligned}
  \end{equation*}
  \caption{\label{tab:heptGc}Constraint Jacobian and objective gradient for the heptagon
  packing problem (continued on opposite page).}
\end{table}
\addtocounter{table}{-1}
\begin{table}[t]
    \begin{equation*}
	\begin{aligned}
	    &\left.\begin{array}{cccc}-2 + 
  2 u^2 & 0 & 0 & 0 \\11 + 2 u - 16 u^2 & 0 & 0 & 0 \\0 & -v - 2 u v + 4 u^2 v & 
 \tfrac32 + u - 4 u^2 & 
 2 - 2 u^2 \\0 & -v - 2 u v + 4 u^2 v & 
 \tfrac32 + u - 4 u^2 & -11 - 2 u + 16 u^2 \\2 - 2 u^2 & 0 & 0 & 0 \\-\tfrac{19}2 - u + 12 u^2 & 0 & 0 & 0 \\0 &
  2 v - 4 u^2 v & -\tfrac12 - 2 u + 2 u^2 & -2 + 2 u^2 \\0 &
  2 v - 4 u^2 v & -\tfrac12 - 2 u + 2 u^2 & 
  \tfrac{19}2 + u - 12 u^2 \\0 & 0 & 0 & 0\end{array}\right)\\
  &\left.\begin{array}{cccc}\tfrac1{71} (-141 - 45 u + 
   208 u^2), & \tfrac1{71} (70 + 45 u - 66 u^2), & \tfrac1{71} (-141 - 45 u + 
   208 u^2), & -5 - 2 u + 12 u^2\end{array}\right)
	\end{aligned}
  \end{equation*}
  \caption{cont.}
\end{table}

We fix a regular heptagon $K=\mathrm{conv} \{\mathbf{k}_i:i=0,\ldots 6\}$, where $\mathbf{k}_i = \mathrm{Rot}_{2\pi i/7} (1,0)$.
In this case, our calculations are performed in the extension field $\mathbb{Q}(u,v)$, where $u=\cos \pi/7$ and $v=\sin \pi/7$.
A minimum-area half-length parallelogram corresponds to the affine diameter $\mathbf{p}_1\mathbf{p}_4$, where
$\mathbf{p}_1 = \mathbf{k}_0$ and $\mathbf{p}_4 = \tfrac12 (\mathbf{k}_3+\mathbf{k}_4)$.
The vertices of the parallelogram are given by
$\mathbf{p}_2=(1-a)\mathbf k_1+a\mathbf k_2$,
$\mathbf{p}_3=(1-b)\mathbf k_2+b\mathbf k_3$,
$\mathbf{p}_5=b\mathbf k_4+(1-b)\mathbf k_5$, and
$\mathbf{p}_6=a\mathbf k_5+(1-a)\mathbf k_6$.
This half-length parallelogram is illustrated in \ref{fig:hept}.

Following the same steps as in the previous sections, we
obtain the following optimization problem:
\begin{equation}\label{eq:opt-hept}
    \begin{aligned}
	\text{minimize } & f(z)=\alpha(\xi'_0(0),\xi'_1(0),\xi'_2(0))+\alpha(\xi'_0(0),\xi'_6(0),\xi'_1(0) )\text,\\
	\text{subj.\ to } & g_1(z)=\alpha(\xi'_0(\mathbf{k}_2),\xi'_0(\mathbf{k}_1),\xi'_2(\mathbf{k}_1))\ge0\\
	                & g_2(z)=\alpha(\xi'_2(\mathbf{k}_2),\xi'_2(\mathbf{k}_1),\xi'_0(\mathbf{k}_1))\ge0\\
	                & g_3(z)=\alpha(\xi'_0(\mathbf{k}_6),\xi'_0(\mathbf{k}_5),\xi'_6(\mathbf{k}_6))\ge0\\
	                & g_4(z)=\alpha(\xi'_6(\mathbf{k}_6),\xi'_6(\mathbf{k}_5),\xi'_0(\mathbf{k}_6))\ge0\\
	                & g_5(z)=\alpha(\xi'_1(\mathbf{k}_3),\xi'_1(\mathbf{k}_2),\xi'_2(\mathbf{k}_3))\ge0\\
	                & g_6(z)=\alpha(\xi'_2(\mathbf{k}_3),\xi'_2(\mathbf{k}_2),\xi'_1(\mathbf{k}_3))\ge0\\
	                & g_7(z)=\alpha(\xi'_1(\mathbf{k}_5),\xi'_1(\mathbf{k}_4),\xi'_6(\mathbf{k}_4))\ge0\\
	                & g_8(z)=\alpha(\xi'_6(\mathbf{k}_5),\xi'_6(\mathbf{k}_4),\xi'_1(\mathbf{k}_4))\ge0\\
	                & g_9(z)=\alpha(\xi'_1(\mathbf{k}_4),\xi'_1(\mathbf{k}_3),\xi'_0(\mathbf{k}_0))\ge0\text.
    \end{aligned}
\end{equation}
The linearization of \ref{eq:opt-hept} around $z=0$ gives
\begin{equation}\label{eq:lin-hept}
    \text{minimize } c\cdot z \text{ subject to } G z\ge 0\text.
\end{equation}
The values of $G$ and $c$ are given in \ref{tab:heptGc}.
Unfortunately, \ref{eq:lin-hept} is unbounded. This can be shown by producing some
$z_u$ such that $c\cdot z_u<0$ and $G z_u\ge0$. In the dual setting, this implies
that there is no $\eta$ such that $c=\eta^T G$ and $\eta>0$.

Due to \ref{thm:aux}, we are allowed to modify the cost function $f(z)$ by adding
auxiliary functions as long as they are negligible in the aggregate
and isometry-invariant. In order for the new problem to be locally minimized, we
need the new gradient $c'$ to lie in the cone $\{\eta^T G:\eta>0\}$. We will take
the following simple form for our modified problem
\begin{equation}\label{eq:mod-hept}
    \begin{aligned}
	\text{minimize } & f'(z) = f(z) +\sum_{r=1}^9 \mu_r g_r(z)\text,\\
	\text{subj.\ to } & \text{same constraints as in \ref{eq:opt-hept}.}
    \end{aligned}
\end{equation}
For a cell $\mathcal P_\xi$ other than the primitive cell $\mathcal P_{\xi_0}$, the modified version is the same
as \ref{eq:mod-hept}, except we replace $\xi'_i$ everywhere with $\xi\circ\xi'_i$.
Since the problem is invariant under common isometry, the problem is equivalent for all the cells
and it is enough to show that \ref{eq:mod-hept} is locally minimized.
Note that $\xi_2\circ\xi_0=\xi_2$ and $\xi_2\circ\xi_2=\xi_0$, so $g_1^{\mathcal P_{\xi_0}}(z) = g_2^{\mathcal P_{\xi_2}}(z)$
and $g_2^{\mathcal P_{\xi_0}}(z) = g_1^{\mathcal P_{\xi_2}}(z)$. Therefore, if $\mu_1=-\mu_2$, the auxiliary addition
$\mu_1 g_1(z)+\mu_2 g_2(z)$ is equal in magnitude and opposite in sign for the two cells $\mathcal P_{\xi_0}$ and
$\mathcal P_{\xi_2}$ sharing the edge $\xi_0(0)\xi_2(0)$. Similar cancellation occurs across the three other
edges if $\mu_3=-\mu_4$, $\mu_5=-\mu_6$, and $\mu_7=-\mu_8$.
The term $\mu_9 g_9(z)$ does not cancel with any neighboring cells, and so we set $\mu_9=0$.
Therefore, when $\mu_i=-\mu_{i+1}$ for $i=1,3,5,7$ and $\mu_9=0$,
the sum of the auxiliary objectives of a collection of cells only has contributions from unmatched edges at the boundary
of the collection and the auxiliary objectives are negligible in the aggregate.
A choice for $\mu_r$ satisfying these conditions and such that $c'$ lies in the cone
$\{\eta^T G:\eta>0\}$ exists if and only if there is some $\eta$ such that
$c=\eta^T G$ and $\eta_1+\eta_2>0$, $\eta_3+\eta_4>0$, $\eta_5+\eta_6>0$, $\eta_7+\eta_8>0$,
and $\eta_9>0$. We can show directly that such $\eta$ exists, and we give an example
in \ref{tab:heptGc}.

We now have that $G z\ge 0$ and $c'\cdot z\le0$
if and only if $G z = 0$ and $c'\cdot z=0$. The rank of $G$ is again $8$,
and so the program \ref{eq:mod-hept} is minimized exactly at the one-dimensional null space of $G$,
which is generated by the vector $z_0$ given in \ref{tab:heptGc}.
We can verify directly
that $f(tz_0)$ is a quadratic function of $t$ minimized at $t=0$, and that
$g_r(tz_0)=0$ identically for $r=1,\ldots,9$. 

Therefore, \ref{eq:mod-hept} satisfies all the conditions of \ref{thm:slice}, and we have:

\begin{theorem}
    The optimal double-lattice packing of regular heptagons, illustrated in \ref{fig:hept}, is strongly extreme.
\end{theorem}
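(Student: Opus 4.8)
The plan is to follow the pentagon template but to invoke the auxiliary‑objective refinement \ref{thm:aux} rather than \ref{thm:noaux}. First I would build the honeycomb of \ref{sec:doublatt} for the half‑length parallelogram $\mathbf p_2\mathbf p_3\mathbf p_5\mathbf p_6$ of \ref{fig:hept}, reducing the question to the single cell $\mathcal P_{\xi_0}$ bounded by the four heptagons $\xi_i(K)$, $i=0,1,2,6$. Fixing $\xi_1'=\xi_1$ to quotient out the common isometry and writing $\xi_i'=\mathrm{Tran}_{(x_i,y_i)}\xi_i\mathrm{Rot}_{\theta_i}$, I would use \ref{lemma:edgedg} and \ref{lemma:edgver} to replace the non‑overlap constraints between nearby heptagon pairs by the signed‑area constraints $g_1,\dots,g_9$; this requires first checking that the optimal packing exhibits only the edge‑to‑vertex and edge‑to‑edge contacts those lemmas cover, with no vertex‑vertex or full‑edge coincidences, which is the case here. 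The linearization of the resulting problem \ref{eq:opt-hept} at $z=0$ is the linear program \ref{eq:lin-hept}, with data $G,c$ recorded in \ref{tab:heptGc}.

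The crux, and the point of departure from the pentagon, is that \ref{eq:lin-hept} is unbounded below on its feasible cone; equivalently, no $\eta>0$ satisfies $c=\eta^T G$, so \ref{thm:noaux} does not apply directly. To repair this I would add an auxiliary objective $\sum_{r=1}^9\mu_rg_r(z)$ as permitted by \ref{thm:aux}. Two requirements pin down admissible $\mu_r$. First, the auxiliary family must be negligible in the aggregate; I would secure this by forcing the contribution of each shared cell edge to cancel between the two cells meeting along it, which gives $\mu_1=-\mu_2$, $\mu_3=-\mu_4$, $\mu_5=-\mu_6$, $\mu_7=-\mu_8$, and $\mu_9=0$ since the $g_9$ edge has no matching partner. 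Second, the family must be isometry‑invariant, which holds because each $g_r$ is a signed triangle area of image points and is unchanged under a common isometry of all four heptagons. Under the sign‑pairing constraints the modified gradient $c'=c+\sum_r\mu_rG_r$ can be placed in the open cone $\{\eta^T G:\eta>0\}$ precisely when there is an $\eta$ with $c=\eta^T G$ and the relaxed positivity $\eta_1+\eta_2>0$, $\eta_3+\eta_4>0$, $\eta_5+\eta_6>0$, $\eta_7+\eta_8>0$, $\eta_9>0$. I would then simply exhibit such an $\eta$, as in \ref{tab:heptGc}, and read off a compatible $\mu$.

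With $c'$ in the open cone, the fundamental theorem of linear algebra forces $Gz\ge0$ and $c'\cdot z\le0$ to imply $Gz=0$ and $c'\cdot z=0$, so the modified program is minimized exactly on $\ker G$. A rank computation over $\mathbb Q(u,v)$ gives $\mathrm{rank}\,G=8$, so $\ker G$ is the line spanned by the vector $z_0$ of \ref{tab:heptGc}, which I would identify with the tangent to the piecewise‑linear family of half‑length parallelograms of \ref{sec:doublatt}; hence $g_r(tz_0)=0$ for all $r$ and $f(tz_0)$ is quadratic in $t$, and that this quadratic has a strict minimum at $t=0$ is exactly the assertion that the parallelogram of \ref{fig:hept} is an isolated minimizer of area, i.e. the content of \ref{thmkk} for the regular heptagon. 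Rotating coordinates so that $e_1$ points along $z_0$, the modified problem \ref{eq:mod-hept} satisfies Conditions \ref{cnd:assumpt}, so \ref{thm:slice} gives the origin as the unique local solution and \ref{thm:aux} then yields strong extremality. The one genuinely nonroutine ingredient is producing the witness $\eta$ meeting the relaxed positivity conditions; the remainder is the rank computation and the one‑parameter check along $z_0$.
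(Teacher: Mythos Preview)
Your proposal is correct and follows essentially the same argument as the paper: the honeycomb reduction, the auxiliary objective $\sum_r\mu_r g_r$ with the sign pairings $\mu_1=-\mu_2,\dots,\mu_7=-\mu_8,\mu_9=0$ to achieve negligibility in the aggregate, the relaxed positivity witness $\eta$, the rank-$8$ computation identifying $\ker G=\mathrm{span}\,z_0$ with the half-length-parallelogram deformation, and the appeal to \ref{thm:slice} and \ref{thm:aux}. The only cosmetic difference is that you explicitly flag the contact-type check and the isometry-invariance of the $g_r$, which the paper leaves implicit in this subsection.
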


\subsection{General polygons}\label{sec:genpoly}

The structure of the solution in the cases of pentagons and heptagons suggests
that it might be possible to extend the result to general convex polygons.
We will consider a general convex polygon and a double-lattice packing
generated by a half-length parallelogram. We will assume that this half-length
parallelogram is an isolated local minimum of the area among half-length
parallelograms. Due to \ref{thm:nopivot} and \ref{thm:novtx}, this assumption implies
that, except in the exceptional cases, all the contacts in the double-lattice packing fall into the
vertex-to-edge and the edge-to-edge types. Moreover, when the affine diameter
is not unique, the contact between $K$ and $\mathrm{Tran}_{\mathbf{p}_1-\mathbf{p}_4}(K)$ is
edge-to-edge, but the associated constraint can be relaxed to a vertex-to-edge
constraint so that our analysis may follow the generic case.
We find that the only data about
the polygon that enters into the calculation are the following:
\begin{enumerate}
    \item The coordinates of vertices of the minimum-area half-length parallelogram
	and the endpoints of a corresponding affine diameter, with at least one endpoint
	in the interior of an edge. Without loss of
	generality, we may assume the affine diameter is horizontal, of length 2,
	and bisected by the origin. The remaining data are encoded into the following
	parameters: $h$ is the height of parallelogram and $a$ is the horizontal shift
	between the top and bottom sides of the parallelogram. The position of the parallelogram
	center, not determined by these parameters, is eliminated during the calculation.
    \item The angle of the polygon edges on which the vertices of the parallelogram and
	the endpoint of the affine diameter lie, $\phi_i$, $i=2,3,4,5,6$.
    \item The distance and direction along the edge from those points to the nearest polygon vertex, that 
	is, half the length of the contact. We denote these distances $l_i$, $i=2,3,4,5,6$. For the directions,
	we will assume the nearest polygon vertex is counterclockwise from the parallelogram vertex, but we will argue later that
	this assumption has no effect on the subsequent analysis.
\end{enumerate}

The assumption that the area of the half-length parallelogram is minimized can be written as
\begin{equation}\label{eq:stat}
    a= h \cot\phi_4 - 
    \frac{\sin\phi_3 \sin(\phi_4 - \phi_2)}{\sin\phi_4 \sin(\phi_3-\phi_2)} + 
    \frac{\sin\phi_5 \sin(\phi_6 - \phi_4)}{\sin\phi_4 \sin(\phi_6-\phi_5)}
\end{equation}

As in the previous sections the objective is given by 
the area of the quadrilateral $\xi'_0(0)\xi'_6(0)\xi'_1(0)\xi'_2(0)$, and we parameterize
the search space using $z=(x_0,\ldots,\theta_6)\in\mathbb{R}^9$ and $\xi'_1=\xi_1$.
The oriented triangles to be used to represent the nonoverlap constraints depend on the directions
of $l_i$, $i=2,3,5,6$. With the contact directions assumed, the optimization problem is:
\begin{equation}\label{eq:opt-gen}
    \begin{aligned}
	\text{minimize } & f(z)=\alpha(\xi'_0(0),\xi'_1(0),\xi'_2(0))-\alpha(\xi'_0(0),\xi'_6(0),\xi'_1(0) )\text,\\
	\text{subj.\ to } & g_1(z)=\alpha(\xi'_0(\mathbf{p}_2+l_2\mathbf{u}_2),\xi'_0(\mathbf{p}_2-l_2\mathbf{u}_2),\xi'_2(\mathbf{p}_2+l_2\mathbf{u}_2))/l_2^2\ge0\\
	                  & g_2(z)=\alpha(\xi'_2(\mathbf{p}_2+l_2\mathbf{u}_2),\xi'_2(\mathbf{p}_2-l_2\mathbf{u}_2),\xi'_0(\mathbf{p}_2+l_2\mathbf{u}_2))/l_2^2\ge0\\
	                  & g_3(z)=\alpha(\xi'_2(\mathbf{p}_3+l_3\mathbf{u}_3),\xi'_2(\mathbf{p}_3-l_3\mathbf{u}_3),\xi'_1(\mathbf{p}_3+l_3\mathbf{u}_3))/l_3^2\ge0\\
	                  & g_4(z)=\alpha(\xi'_1(\mathbf{p}_3+l_3\mathbf{u}_3),\xi'_1(\mathbf{p}_3-l_3\mathbf{u}_3),\xi'_2(\mathbf{p}_3+l_3\mathbf{u}_3))/l_3^2\ge0\\
	                  & g_5(z)=\alpha(\xi'_1(\mathbf{p}_5+l_5\mathbf{u}_5),\xi'_1(\mathbf{p}_5-l_5\mathbf{u}_5),\xi'_6(\mathbf{p}_5+l_5\mathbf{u}_5))/l_5^2\ge0\\
	                  & g_6(z)=\alpha(\xi'_6(\mathbf{p}_5+l_5\mathbf{u}_5),\xi'_6(\mathbf{p}_5-l_5\mathbf{u}_5),\xi'_1(\mathbf{p}_5+l_5\mathbf{u}_5))/l_5^2\ge0\\
	                  & g_7(z)=\alpha(\xi'_6(\mathbf{p}_6+l_6\mathbf{u}_6),\xi'_6(\mathbf{p}_6-l_6\mathbf{u}_6),\xi'_0(\mathbf{p}_6+l_6\mathbf{u}_6))/l_6^2\ge0\\
	                  & g_8(z)=\alpha(\xi'_0(\mathbf{p}_6+l_6\mathbf{u}_6),\xi'_0(\mathbf{p}_6-l_6\mathbf{u}_6),\xi'_6(\mathbf{p}_6+l_6\mathbf{u}_6))/l_6^2\ge0\\
	                  & g_9(z)=\alpha(\xi'_1(\mathbf{p}_4+l_4\mathbf{u}_4),\xi'_1(\mathbf{p}_4-l_4\mathbf{u}_4),\xi'_0(\mathbf{p}_1))/l_4^2\ge0\text.
    \end{aligned}
\end{equation}

We linearize the problem to obtain a problem of the form \ref{eq:lin-hept}.
The constraint matrix $G$ is singular, and we can obtain right and left null
space vectors $z_0$ and $\eta_0$, whose values are given in \ref{tab:genz0e0}.
In fact, the null spaces are one-dimensional, as can be seen by calculating 
$\det(G-\eta_0 z^T)/(z_0\cdot z_0) = 2^{14}\sin(\phi_3-\phi2)\sin(\phi_6-\phi_5)/(l_2 l_3 l_4 l_5 l_6) \neq0$.
Note that the variables associated with the rotations $\theta_i$ in $z_0$ are zero,
and the assignment $z=tz_0$ corresponds to evolving the half-length parallelogram.
We therefore will have that $f(tz_0)$ has an isolated local minimum at $t=0$ and that $g_r(tz_0)=0$ for all $t$.
We note also that $z_0\cdot c = 0$ and so $c$ is contained in the row space of $G$.
We solve for the vector $\eta$ such that $\eta\cdot G = c$ and $\eta\cdot\eta_0=0$. We obtain the
following values:
\begin{equation}\label{eq:etas}
    \begin{aligned}
	\eta_1+\eta_2 &= -l_2\sin\phi_3/\sin(\phi_3-\phi_2)\\
	\eta_3+\eta_4 &= l_3\sin\phi_2/\sin(\phi_3-\phi_2)\\
	\eta_5+\eta_6 &= l_5\sin\phi_6/\sin(\phi_6-\phi_5)\\
	\eta_7+\eta_8 &= -l_6\sin\phi_5/\sin(\phi_6-\phi_5)\\
	\eta_9 &= -\frac{l_4}{\sin\phi_4}\left(h - \frac{1}{\cot\phi_3-\cot\phi_2} - \frac{1}{\cot\phi_6-\cot\phi_5}\right)\text.
    \end{aligned}
\end{equation}
Since all the above are positive, we can proceed as in the case of heptagons to include auxiliary functions
that would make all the $\eta_i$'s individually positive and be negligible in the
aggregate and isometry-invariant.
Therefore, we have shown that the packing is strongly extreme.

\begin{table}
    \begin{equation*}
	\begin{aligned}
	    z_0= &\left(\begin{array}{ccccc}
		\cos\phi_4, & \sin\phi_4, & 0, & \cos(\phi_3) \sin(\phi_2 - \phi_4)/\sin(\phi_2 - \phi_3), & \sin(\phi_3) \sin(\phi_2 - \phi_4) / \sin(\phi_2 - \phi_3),\end{array}\right.\\
	    &\left.\begin{array}{cccc} 0, & 
		\cos(\phi_5) \sin(\phi_4 - \phi_6) / \sin(\phi_5 - \phi_6), & \sin(\phi_5) \sin(\phi_4 - \phi_6) / \sin(\phi_5 - \phi_6), & 0 \end{array}\right)\\
	    \eta_0 = &\left(\begin{array}{ccccccccc} 1 & -1 & 1 & -1 & 1 & -1 & 1 & -1 & 0\end{array}\right)
	\end{aligned}
    \end{equation*}
    \caption{\label{tab:genz0e0}Right and left null space generators of the constraint matrix in the general polygon packing problem.}
\end{table}

To see that the directions of the contacts do not matter, simply relax the 
correct signed-area constraints to the signed-area constraints derived from a
proper subinterval of the contact with the contact direction as assumed in the proof.
This change is in fact a relaxation in an appropriate neighborhood. Since
the relaxed problem is still solved uniquely at $\xi_i'=\xi_i$, the same
is true of the original. The same argument also takes care of contacts
at full edges.

We have therefore proved that any double-lattice packing that is an isolated
local minimum among double-lattice packings and is not one of the exceptional
types of \ref{def:except} is also strongly extreme.
In practice, we have a procedure which, when combined with the Mount algorithm, takes a general polygon as
input, produces the densest double lattice packing, and certifies it as local maximum among general packings.
If the densest double lattice packing is not an isolated maximum or is of exceptional type, further analysis is necessary.

\textbf{Acknowledgments}.
Y.\ K.\ was supported by an Omidyar Fellowship at the Santa Fe Institute. W.\ K.\ was supported by Austrian Science Fund (FWF) Project 5503 and National Science Foundation (NSF) Grant No. 1104102.  We wish to thank the Erwin Schr\"odinger International Institute for Mathematical Physics (ESI) and the Institute for Computational and Experimental Research (ICERM) for supporting our visits and hosting programs that facilitated this work.

\bibliography{ngon}
\end{document}